\definecolor{ffmblue}{HTML}{006092}
\newcounter{mparcnt}
\declaretheorem[name=Theorem,numberwithin=section]{thm}
\declaretheorem[name=Remark,style=remark,sibling=thm]{rem}
\declaretheorem[name=Lemma,sibling=thm]{lemma}
\declaretheorem[name=Proposition,sibling=thm]{prop}
\declaretheorem[style=definition,name=Definition,numbered=no]{definition}
\numberwithin{equation}{section}
\newcommand{\ti}{\tilde}
\newcommand{\bs}{\backslash}
\newcommand{\cn}{\colon}
\newcommand{\sub}{\subset}
\newcommand{\mr}{\mathring}
\newcommand{\bbR}{\mathbb{R}}
\newcommand{\bbS}{\mathbb{S}}
\newcommand{\8}{\infty}
\newcommand{\al}{\alpha}
\newcommand{\be}{\beta}
\newcommand{\ga}{\gamma}
\newcommand{\de}{\delta}
\newcommand{\ep}{\epsilon}
\newcommand{\ka}{\kappa}
\newcommand{\la}{\lambda}
\newcommand{\Om}{\Omega}
\newcommand{\De}{\Delta}
\newcommand{\cU}{\mathcal{U}}
\newcommand{\cW}{\mathcal{W}}
\newcommand{\del}{\partial}
\newcommand{\n}{\nabla}
\newcommand{\fa}{\forall}
\newcommand{\rt}{\sqrt}
\newcommand{\ip}[2]{\left\langle #1,#2 \right\rangle}
\newcommand{\fr}[2]{\frac{#1}{#2}}
\newcommand{\tfr}[2]{\tfrac{#1}{#2}}
\DeclareMathOperator{\dive}{div}
\DeclareMathOperator{\id}{id}
\DeclareMathOperator{\pr}{pr}
\DeclareMathOperator{\dist}{dist}
\DeclareMathOperator{\tr}{tr}
\DeclareMathOperator{\diam}{diam}
\newcommand{\pf}[1]{\begin{proof}#1 \end{proof}}
\newcommand{\eq}[1]{\begin{equation}\begin{alignedat}{2} #1 \end{alignedat}\end{equation}}
\newcommand{\br}[1]{\left(#1\right)}
\newcommand{\abs}[1]{\lvert #1\rvert}
\newcommand{\enum}[1]{\begin{enumerate}[(i)] #1 \end{enumerate}}
\newcommand{\ra}{\rightarrow}
\newcommand{\mt}{\mapsto}
\newcommand{\hp}{\hphantom}
\newcommand{\q}{\quad}
\begin{document}
\title[Stability for anisotropic curvature functionals]{Stability of the Wulff shape with respect to anisotropic curvature functionals}

\author{Julian Scheuer}
\address{\flushleft\parbox{\linewidth}{{\bf Julian Scheuer}\\Goethe-Universit\"at\\ Institut f\"ur Mathematik\\ Robert-Mayer-Str.~10\\ 60325 Frankfurt\\ Germany\\ {\href{mailto:scheuer@math.uni-frankfurt.de}{scheuer@math.uni-frankfurt.de}}}}

%
 \author{Xuwen Zhang}
 \address{\flushleft\parbox{\linewidth}{{\bf Xuwen Zhang}\\School of Mathematical Sciences\\ Xiamen University\\ 361005 Xiamen\\ P.R. China}}  
\address{\flushleft\parbox{\linewidth}{Goethe-Universit\"at\\ Institut f\"ur Mathematik\\ Robert-Mayer-Str.~10\\ 60325 Frankfurt\\ Germany\\ {\href{mailto:zhang@math.uni-frankfurt.de}{zhang@math.uni-frankfurt.de}} }}

\begin{abstract}
For a function $f$ which foliates a one-sided neighbourhood of a closed hypersurface $M$, we give an estimate of the distance of $M$ to a Wulff shape in terms of the $L^{p}$-norm of the traceless $F$-Hessian of $f$, where $F$ is the support function of the Wulff shape. This theorem is applied to prove quantitative stability results for the anisotropic Heintze-Karcher inequality, the anisotropic Alexandrov problem, as well as for the anisotropic overdetermined boundary value problem of Serrin-type.
\end{abstract}

\date{\today}
\keywords{Anisotropic curvature conditions; Anisotropic overdetermined problem}
\maketitle
\tableofcontents

\section{Introduction}

The classical umbilicity theorem for a closed hypersurface $M$ of the Euclidean space states that if at every given point $x\in M$ all the principal curvatures coincide, then $M$ is a round sphere. From this theorem it is by today quite straightforward to deduce other such {\it rigidity theorems}, such as the classification of closed and embedded constant mean curvature hypersurfaces \cite{Alexandroff:12/1962}, the Heintze-Karcher inequality \cite{MontielRos:/1991,Ros:/1987},
\eq{\int_{M}\fr{n}{H}\geq (n+1)\abs{\Om},}
and the classification of domains that admit solutions to the torsion Dirichlet problem
\eq{\De u &= 1\q\mbox{in}~\Om\\
		 u&= 0\q\mbox{on}~\del\Om }
with constant Neumann derivative \cite{Serrin:01/1971}.
Although these results were originally all proved by other means, by today they can all be proved via more or less direct application of the umbilicity theorem. These approaches also allow the extraction of the so-called {\it corresponding stability results}, which estimate the distance to a round sphere in terms of the deficits within the rigidity assumptions in the respective problems. Such results have been intensively studied for example in \cite{BrandoliniNitschSalaniTrombetti:09/2008,Ciraolo:07/2020,CiraoloMaggi:04/2017,CiraoloVezzoni:02/2018,MagnaniniPoggesi:01/2020,MagnaniniPoggesi:/2020,ScheuerXia:10/2022}.  As in this paper we are not interested in the classical isotropic setting, we do not expand further on the history of these problems but refer to \cite{Scheuer:03/2021} for a much broader overview. Many rigidity results for hypersurfaces are proved by solving elliptic Dirichlet problems and showing that the traceless Hessian of the solution $f$ vanishes. A rigidity result due to Reilly \cite{Reilly:03/1980} then gives roundness of the level sets. In \cite{Scheuer:03/2021} the first author proved a stability version for this result of the form
\eq{\label{intro-1}\dist(M,\bbS^{n})\leq C\|\mr{\bar\n}^{2}f\|_{p,\cU}^{\fr{p}{p+1}}}
with a constant $C$ depending on various geometric quantities, $\|\cdot\|_{p,\cU}$ is the $L^{p}$-norm of some suitable one-sided neighbourhood $\cU$ of $M$ and where $p>n$, $n$ being the dimension of the hypersurface. We don't go into further details here. This estimate provides a very direct approach to the stability question of many geometric rigidity theorems.

The aim of this paper is to obtain an estimate as \eqref{intro-1} in the anisotropic setting and to give a few applications. If $\cW$ is a strictly convex, smooth hypersurface a.k.a. {\it Wulff shape} with support function $F$, one can define the so-called {\it anisotropic principal curvatures} of another hypersurface $M$ as the eigenvalues of an anisotropic second fundamental form, see \autoref{sec:Prelim} for the detailed setup. Then there is a characterization of the Wulff shape: It is up to rigid motions the only smooth hypersurface with traceless anisotropic second fundamental form, \cite{Palmer:12/1998}. Stability versions of this were recently proved in \cite{De-RosaGioffre:/2019,De-RosaGioffre:/2021}. 

First we obtain a stability result similar to \eqref{intro-1} in the anisotropic setting: The Hausdorff distance of a hypersurface $M$ to a Wulff shape is estimated in terms the the traceless $F$-Hessian operator of a  function defined in a one-sided neighbourhood of $M$ satisfying $f_{|M}=0$. The key to this estimate is a relation of the traceless anisotropic second fundamental form of the level sets and the traceless $F$-Hessian. The co-area formula in conjunction with the anisotropic almost umbilicity theorem \cite{De-RosaGioffre:/2021} then gives the result.

This result will then be used to give a unified approach to stability of the anisotropic versions of Alexandroff's theorem, the Heintze-Karcher inequality and Serrin's overdetermined problem.

To state the first main result, let us briefly skim over the notation used. Details are deferred to \autoref{sec:Prelim}. In the following theorem, besides the things which are self-explanatory, we denote by $S$ the anisotropic Weingarten operator which is basically given by
\eq{S = dN^{-1}\circ d\ti\nu,}
where $N$ and $\ti \nu$ are the Gauss maps of the Wulff shape $\cW$ and $M$ respectively. $D$ is the Euclidean Levi-Civita connection and $D^{\sharp}$ the gradient operator. The {\it pinching quantity} $\mr{\bar\n}^{2}_{F}f$ is an elliptic operator arising from the so-called {\it elliptic integrand} $F$, which may be degenerate at points where $Df=0$. $L^{p}$-norms are defined with respect to the standard induced Euclidean volume element $d\ti\mu$ and $d\mu$ is the anisotropic area measure
\eq{d\mu = F(\ti \nu)d\ti \mu.}
Any dilation of the Wulff shape $\cW$ we will call {\it Wulff sphere}. The spaces $C^{k}$ and $C^{k,\al}$ are the usual spaces of $k$-times differentiable functions resp. such with $\al$-H\"older continuous highest derivatives.
Then our first main result reads as follows.

\begin{thm}\label{Thm-levelset-stab}
Let $n\geq2$ and let $F$ be an elliptic integrand on $\mathbb{R}^{n+1}$.
Let $M\sub\bbR^{n+1}$ be a closed and connected $C^{2}$-hypersurface.
Suppose that there is a one-sided neighborhood $\mathcal{U}\subset\mathbb{R}^{n+1}$ of $M$, which is foliated by the level-sets of a function $f\in C^{2}(\overline{\mathcal{U}})$, i.e.
\eq{
\overline{\mathcal{U}}
=\bigcup_{0\leq t\leq \max_{\bar\cU}\abs{f}}M_t,\quad M_t=\{\abs{f}=t\},}
such that
$f_{|M} = 0$ and $Df\neq 0$ on $\overline{\mathcal{U}}$.
Let $p>n$ and suppose that for some $C_0\geq1$, there holds
\eq{\label{condi-hnorm-upperbound}\mu(M)^{\frac{p-n}{np}}\max_{0\leq t\leq \max_{\bar\cU}\abs{f}}\|S\|_{p,M_{t}}\leq C_0,
}	
then there exists a positive constant $C=C(n,F,p,C_0)$, such that 
\eq{\label{condi-tracefree-D^2f}\|\mr{\bar\n}^{2}_{F}f\|_{p,\mathcal{U}}^{\frac{p}{p+1}}
    < \fr{1}{C}\mu(M)^{\frac{n+1-2p}{n(p+1)}}\min\left(\mu(M)^{\fr{1}{n}}\min_{\overline{\mathcal{U}}}\vert D^\sharp f\vert,\max_{\overline{\mathcal{U}}}\abs{f}\right)^{\frac{p}{p+1}}
    }
    implies the existence of a Wulff sphere $\bf{W}$ with the property
    \eq{\label{conclu-distance}\dist(M,\mathbf{W})
    \leq \frac{C\mu(M)^{\frac{3p-n}{n(p+1)}}}{\min\left(\mu(M)^{\fr 1n}\min_{\overline{\mathcal{U}}}\vert D^\sharp f\vert,\max_{\overline{\mathcal{U}}}\abs{f}\right)^{\frac{p}{p+1}}} \|\mr{\bar\n}^{2}_{F}f\|_{p,\mathcal{U}}^{\frac{p}{p+1}}.
    }
\end{thm}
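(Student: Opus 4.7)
The strategy is to reduce the stability estimate \eqref{conclu-distance} to the anisotropic almost-umbilicity theorem of \cite{De-RosaGioffre:/2021} applied on a carefully selected level set $M_{t_0}\sub\bar\cU$. Four ingredients drive the argument: a pointwise identity controlling the traceless anisotropic Weingarten $\mr S_t$ of $M_t$ by the restriction of $\mr{\bar\n}^2_F f$; the co-area formula; a mean-value extraction of a good level; and a one-parameter optimization yielding the exponent $p/(p+1)$.

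\textbf{Step 1 (pointwise identity).} The anisotropic unit normal of $M_t$ is $\ti\nu_t = \pm D^\sharp f/|D^\sharp f|$, and differentiating the defining identity $S_t = dN^{-1}\circ d\ti\nu_t$ tangentially gives, after applying the traceless projection, a pointwise bound of the form
\[
|\mr S_t|\leq \frac{C_F}{|D^\sharp f|}\,|\mr{\bar\n}^2_F f|
\]
on $M_t$, with $C_F$ depending only on the $C^2$-norm of $F$. This is the anisotropic analogue of the identity $|\mr{\mrm{II}}_{M_t}|\leq|Df|^{-1}|\mr{D^2f}|$ underpinning the isotropic result \cite{Scheuer:03/2021}; the cross-terms arising from the tangential variation of $\ti\nu_t$ and of $dN$ collectively vanish under the traceless projection. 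Degeneracy of $\bar\n_F$ is not an issue thanks to $Df\neq0$ on $\bar\cU$.

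\textbf{Step 2 (co-area, mean value, almost umbilicity).} Writing $T:=\max_{\bar\cU}|f|$ and applying the co-area formula, for each $\tau\in(0,T]$ one has
\[
\int_0^\tau\int_{M_s}\frac{|\mr{\bar\n}^2_F f|^p}{|D^\sharp f|}\,d\ti\mu_s\,ds \;\leq\; \|\mr{\bar\n}^2_F f\|_{p,\cU}^p,
\]
and the mean-value theorem produces a level $t_0\in[0,\tau]$ for which the inner integral is at most $\tau^{-1}\|\mr{\bar\n}^2_F f\|_{p,\cU}^p$. Combined with Step~1 this yields
\[
\|\mr S_{t_0}\|_{p,M_{t_0}}\;\leq\; \frac{C}{\tau^{1/p}\,(\min_{\bar\cU}|D^\sharp f|)^{(p-1)/p}}\,\|\mr{\bar\n}^2_F f\|_{p,\cU}.
\]
Hypothesis \eqref{condi-hnorm-upperbound} provides the a priori bound on $\|S_{t_0}\|_p$ required by the anisotropic almost-umbilicity theorem of \cite{De-RosaGioffre:/2021} (which also needs $p>n$), and the smallness assumption on $\|\mr{\bar\n}^2_F f\|_{p,\cU}$ makes the right-hand side above small enough for that theorem to apply; it then produces a Wulff sphere $\mathbf{W}$ with $\dist(M_{t_0},\mathbf{W})\leq C\|\mr S_{t_0}\|_{p,M_{t_0}}$.

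\textbf{Step 3 (optimization and main obstacle).} Since points of $M=M_0$ reach $M_{t_0}$ along the flow of $D^\sharp f/|D^\sharp f|^2$ in time at most $t_0/\min_{\bar\cU}|D^\sharp f|$, the triangle inequality gives
\[
\dist(M,\mathbf{W})\;\leq\; \frac{\tau}{\min_{\bar\cU}|D^\sharp f|} + \frac{C\,\|\mr{\bar\n}^2_F f\|_{p,\cU}}{\tau^{1/p}(\min_{\bar\cU}|D^\sharp f|)^{(p-1)/p}}.
\]
The unconstrained minimum in $\tau$, attained at $\tau_\ast\propto\|\mr{\bar\n}^2_F f\|_{p,\cU}^{p/(p+1)}(\min|D^\sharp f|)^{1/(p+1)}$, equals a constant times $\|\mr{\bar\n}^2_F f\|_{p,\cU}^{p/(p+1)}/(\min|D^\sharp f|)^{p/(p+1)}$, producing one branch of \eqref{conclu-distance}; when $\tau_\ast>T$, the boundary choice $\tau=T$ combined with the smallness hypothesis yields the $T^{-p/(p+1)}$ branch, and taking the minimum of the two denominators gives the stated bound. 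The $\mu(M)$-powers are forced by the scale invariance of the problem. The most delicate point is Step~1: the anisotropic Weingarten couples the Euclidean Hessian of $f$ with derivatives of $F$ evaluated along the non-constant direction $D^\sharp f/|D^\sharp f|$, and isolating the pinching tensor requires careful bookkeeping to show that the many lower-order cross-terms collectively vanish under the traceless projection.
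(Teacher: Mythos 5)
Your overall architecture coincides with the paper's: a pointwise bound of $|\mr S|$ on level sets by $|\mr{\bar\n}^{2}_{F}f|/F(D^{\sharp}f)$, the co-area formula plus a Chebyshev/mean-value selection of a good level, the anisotropic almost-umbilicity theorem of De Rosa--Gioffr\'e on that level, and then the triangle inequality along the level-set flow with an optimization in the level height producing the exponent $p/(p+1)$ and the two branches of the minimum. However, there is a genuine gap in Step 2: the almost-umbilicity theorem you invoke is a normalized statement — it is applied to a hypersurface rescaled so that its area matches $\ti\mu(\cW)$, and both the a priori bound on $\|S\|_{p}$ and the smallness threshold for $\|\mr S\|_{p}$ must be verified \emph{after} rescaling, where they pick up a factor $\lambda_s^{(n-p)/p}$ with $\lambda_s=(\ti\mu(\cW)/\ti\mu(M_s))^{1/n}$. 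You never establish any control on $\ti\mu(M_{t})$ for the levels you select, so the rescaling factor, the rescaled $\|S\|_p$-bound, the applicability of the smallness condition, and the scale of the resulting Wulff sphere are all uncontrolled. In the paper this is exactly where hypothesis \eqref{condi-hnorm-upperbound} does its main work: the evolution of the anisotropic area element under the level-set flow $\partial_t\phi=\nu/F(D^{\sharp}f)$ gives $\partial_t\mu(M_t)\le C\max(1,\mu(M_t))/\min_{\bar\cU}|D^{\sharp}f|$, whence $\tfrac12\le\mu(M_t)\le 2$ only for $t<T_1=\min\bigl(\min_{\bar\cU}|D^{\sharp}f|/(2C),\max_{\bar\cU}|f|\bigr)$. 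This constraint is what forces the admissible level height $t_0$ to stay below $T_1$ and is the structural origin of the $\min\bigl(\mu(M)^{1/n}\min|D^{\sharp}f|,\max|f|\bigr)$ in \eqref{condi-tracefree-D^2f} and \eqref{conclu-distance}; your unconstrained optimization recovers a formally similar expression, but without the area control the optimal level may lie in a range where the umbilicity theorem simply cannot be applied. You use \eqref{condi-hnorm-upperbound} only as the a priori curvature bound in the umbilicity theorem, which misses its essential role.

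A secondary, lesser point: your Step 1 is asserted rather than proved, and the heuristic that ``cross-terms collectively vanish under the traceless projection'' is not quite how it goes. The clean route (the paper's) is to differentiate $f=\const$ twice along $M_t$, giving $h_{ij}=F(D^{\sharp}f)^{-1}D^{2}f(x_i,x_j)$; subtracting traces then leaves a residual term involving $\mr{D}^2f(\nu,\nu)$, which does not vanish but enters with a favorable sign, yielding the inequality $F^{2}(D^{\sharp}f)|\mr S|^{2}\le|\mr{\bar\n}^{2}_{F}f|^{2}$ rather than an identity. This is fillable, but as written it is a claim, not a proof.
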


Note that this theorem is invariant under the scaling transformations
\eq{f\mt \la f,\q x\mt \la x.}
We are going to prove this theorem in \autoref{sec:level set stability}.

As a first application, we obtain a direct estimate of the distance to a Wulff sphere in terms of the anisotropic Heintze-Karcher inequality. The latter has exactly the same form as in the Euclidean space, up to replacing the mean curvature and the area element by their anisotropic counterparts, and it was proved in \cite{GeHeLiMa:/2009} and later in \cite{MaXiong:/2013} with a flow method.
It was already shown in \cite[Prop.~3.7]{DelgadinoMaggiMihailaNeumayer:07/2018} that the anisotropic torsion potential can be controlled by a Heintze-Karcher deficit. However, by using a differently defined Heintze-Karcher deficit, it is possible to give a simplified estimate on the traceless $F$-Hessian of the anisotropic torsion potential and in that way obtain a direct stability estimate via \autoref{Thm-levelset-stab}.
In the following, for $0< \al<1$, $\abs{M}_{2,\al}$ denotes a constant which depends on the $C^{2,\al}$-geometry as it appears in the Schauder estimates.

\begin{thm}\label{Thm-Sta-HK}
Let $n\geq 2$, $\al>0$ and $F$ be an elliptic integrand on $\mathbb{R}^{n+1}$.
Let $\Om\sub \bbR^{n+1}$ be a bounded domain with connected $F$-mean convex boundary $M\in C^{2,\al}$ that satisfies a uniform interior Wulff sphere condition with radius $r$. 
Then there exists a positive constant $C$ depending only on $n,\al,F,r,\mu(M)$ and $\abs{M}_{2,\al}$, such that
\eq{
\left(\int_{M}\frac{1}{H}\,d\mu-\frac{n+1}{n}\vert\Om\vert\right)
\leq C^{-1}
}
implies
\eq{
\dist(M,{\bf W})
    \leq C
    \left(\int_{M}\frac{1}{H}\,d\mu-\frac{n+1}{n}\vert\Om\vert\right)^{\frac{1}{n+2}}
}
for some Wulff sphere ${\bf W}\subset\mathbb{R}^{n+1}$.
\end{thm}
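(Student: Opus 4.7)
The plan is to apply \autoref{Thm-levelset-stab} with $f$ chosen to be the \emph{anisotropic torsion potential} of $\Om$, i.e.\ the unique $f\in C^{2,\al}(\ov\Om)$ solving
\eq{
\cL_F f = 1\q\text{in }\Om,\q f_{|M}=0,
}
where $\cL_F u = \dive\br{F(Du)DF(Du)}$ is the $F$-Laplacian. By the uniform interior Wulff sphere condition of radius $r$, classical barrier arguments using inscribed Wulff spheres together with Hopf's lemma for $\cL_F$ yield $\abs{D^\sharp f}\geq c(r,n,F)>0$ on $M$; by continuity this extends to a one-sided neighbourhood $\cU$ of $M$ in $\Om$ whose closure is foliated by the level sets $\{\abs f=t\}$. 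Schauder theory for $\cL_F$ with $C^{2,\al}$-boundary data then gives $\|f\|_{C^{2,\al}(\ov\Om)}\leq C(n,\al,F,r,\abs M_{2,\al})$, and this bound in turn controls $\max_{\ov\cU}\abs f$ from above, $\min_{\ov\cU}\abs{D^\sharp f}$ from below, and the $L^p$-norms of the anisotropic Weingarten operator $S$ on every level set $M_t$; in particular, condition \eqref{condi-hnorm-upperbound} is satisfied and every constant appearing in the prefactor of \eqref{conclu-distance} is a priori under control.

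The heart of the argument is an anisotropic Reilly/Heintze-Karcher identity: integrating $\cL_F f\cdot f$ by parts, applying the $F$-Bochner formula, and combining with the Minkowski-type boundary integration that underlies the Ros/Montiel-Ros proof of the Heintze-Karcher inequality, one obtains
\eq{
\int_\Om\abs{\mr{\bar\n}^{2}_{F}f}^{2}\,d\ti\mu\;\leq\; C(n,F)\br{\int_M\fr{1}{H}\,d\mu-\fr{n+1}{n}\abs\Om}.
}
The point is that the proof of the anisotropic Heintze-Karcher inequality proceeds by discarding exactly the traceless $F$-Hessian of $f$; retaining it as a remainder turns the rigidity statement into the quantitative bound above. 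The main obstacle here is the potential degeneracy of $\cL_F$ at points where $Df$ vanishes: inside $\cU$ this is ruled out by construction, while in $\Om\setminus\cU$ the identity is justified either by a cut-off near the critical set of $f$ or by approximating $F$ by strictly convex integrands and passing to the limit.

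To match the hypotheses of \autoref{Thm-levelset-stab} I interpolate: since the Schauder bound yields $\|\mr{\bar\n}^{2}_{F}f\|_{\8,\Om}\leq C$, for every $p>2$,
\eq{
\|\mr{\bar\n}^{2}_{F}f\|_{p,\cU}^{\fr{p}{p+1}}\;\leq\; C\,\|\mr{\bar\n}^{2}_{F}f\|_{2,\Om}^{\fr{2}{p+1}}\;\leq\; C\br{\int_M\fr{1}{H}\,d\mu-\fr{n+1}{n}\abs\Om}^{\fr{1}{p+1}}.
}
Choosing $p=n+1>n$ produces precisely the exponent $\fr{1}{n+2}$ claimed in the conclusion. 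For sufficiently small Heintze-Karcher deficit (which is what the hypothesis $\int_M\fr1H\,d\mu-\fr{n+1}{n}\abs\Om\leq C^{-1}$ encodes) the smallness condition \eqref{condi-tracefree-D^2f} is forced, so \autoref{Thm-levelset-stab} applies and delivers the asserted distance estimate, with $C$ depending only on $n,\al,F,r,\mu(M)$ and $\abs M_{2,\al}$.
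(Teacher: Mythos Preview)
Your proposal is correct and follows essentially the same route as the paper: anisotropic torsion potential, gradient lower bound on $M$ via interior Wulff sphere barriers, the anisotropic Reilly identity bounding $\|\mr{\bar\n}^{2}_{F}f\|_{2,\Om}^{2}$ by the Heintze--Karcher deficit, $L^{\infty}$-interpolation to $L^{n+1}$ via the Schauder bound, and finally \autoref{Thm-levelset-stab} with $p=n+1$. One minor caveat: $f$ is in general only $C^{1,\be}(\bar\Om)$ because the operator degenerates at critical points of $f$, and the $C^{2,\al}$-estimate holds only on the tubular neighbourhood $\bar\cU$ where $\abs{D^{\sharp}f}$ is bounded below---but since your interpolation step only requires $\|\mr{\bar\n}^{2}_{F}f\|_{\infty,\cU}$, this does not affect the argument.
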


In \cite[Thm.~1.1]{DelgadinoMaggiMihailaNeumayer:07/2018} the authors use the estimates on their Heintze-Karcher deficit to obtain a qualitative bubbling analysis for the constant anisotropic mean curvature problem. Other stability results for some Weingarten type equations were obtained in \cite{RothUpadhyay:03/2019}, while a stability result for almost constant first and second order mean curvatures we given in \cite{Roth:/2018}. A direct and quantitative stability estimate in terms of the $L^{1}$-deficit seems to be missing in the literature. As a second application of \autoref{Thm-levelset-stab}, we provide such a result by giving a quantitative stability estimate (without bubbling) of a hypersurface with almost constant anisotropic mean curvature in terms of an $L^{1}$-pinching.

\begin{thm}\label{Thm-Sta-Alex}
Let $n\geq 2$, $\al>0$ and $F$ be an elliptic integrand on $\mathbb{R}^{n+1}$.
Let $\Om\sub \bbR^{n+1}$ be a bounded domain with connected boundary $M\in C^{2,\al}$ that satisfies a uniform interior Wulff sphere condition with radius $r$. 
Then there exists a positive constant $C$ depending only on $n,\al,F,r,\mu(M)$ and $\abs{M}_{2,\al}$, such that
\eq{
\left\|H-\tfr{n}{n+1}\tfr{\mu(M)}{\abs{\Om}}\right\|_{1,M}
\leq C^{-1}
}
implies
\eq{
\dist(M,{\bf W})
    \leq C
    \left\|H-\tfr{n}{n+1}\tfr{\mu(M)}{\abs{\Om}}\right\|_{1,M}^{\frac{1}{n+2}}
}
for some Wulff sphere ${\bf W}\subset\mathbb{R}^{n+1}$.
\end{thm}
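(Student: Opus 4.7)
The plan is to mirror the proof of Theorem \ref{Thm-Sta-HK}: apply the level-set stability result Theorem \ref{Thm-levelset-stab} to the anisotropic torsion potential of $\Om$, controlling its traceless $F$-Hessian by the Alexandrov deficit. Concretely, let $f\in C^{2,\al}(\ov\Om)$ solve the anisotropic torsion problem $-L_{F}f=n+1$ in $\Om$ with $f_{|M}=0$, where $L_{F}$ denotes the anisotropic Laplacian associated to $F$, and set $H_{0}:=\tfr{n}{n+1}\tfr{\mu(M)}{\abs{\Om}}$.

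As a first step, I would verify the geometric prerequisites of Theorem \ref{Thm-levelset-stab}. Uniform Schauder estimates, together with the bound on $\abs{M}_{2,\al}$, give $\|f\|_{C^{2,\al}(\ov\Om)}\leq C$ and hence uniform bounds on the anisotropic Weingarten operators of the level sets $M_{t}$, which yields \eqref{condi-hnorm-upperbound}. The interior Wulff sphere condition of radius $r$ in combination with the anisotropic Hopf lemma ensures $Df\neq 0$ on $M$, a uniform lower bound on $\min_{\ov\cU}\abs{D^{\sharp}f}$, and a foliation of a one-sided neighbourhood $\cU$ of $M$ by $\{M_{t}\}$.

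The heart of the proof is then the estimate
\[
\|\mr{\bar\n}^{2}_{F}f\|_{n+1,\cU}^{n+1}\leq C\,\|H-H_{0}\|_{1,M}.
\]
To derive it, I would use an anisotropic Reilly--Pohozaev identity for $f$ of the form
\[
\int_{\Om}\abs{\mr{\bar\n}^{2}_{F}f}^{2}\,d\ti\mu
\leq C\int_{M}H\,\br{f_{\nu}^{F}-H_{0}^{-1}}^{2}\,d\mu,
\]
in which both sides vanish on the Wulff shape. The $C^{2,\al}$-control from the first step bounds $\|f_{\nu}^{F}-H_{0}^{-1}\|_{\8}$, so the $L^{2}$-deficit on $M$ is controlled by its $L^{1}$-counterpart, which I would in turn bound by $\|H-H_{0}\|_{1,M}$ using the anisotropic Minkowski formula and integration by parts. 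Interpolating between the resulting $L^{2}$-bound and the $L^{\8}$-bound on $\mr{\bar\n}^{2}_{F}f$ produces the $L^{n+1}$-bound. Applying Theorem \ref{Thm-levelset-stab} with $p=n+1>n$ then turns \eqref{conclu-distance} into the announced exponent $1/(n+2)$.

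The hardest point is the deficit comparison. Unlike the Heintze-Karcher case, where the deficit is naturally produced by the Reilly--Pohozaev identity, the Alexandrov $L^{1}$-deficit has to be inserted into the identity by hand; and since $M$ is \emph{not} assumed $F$-mean convex here (in contrast to Theorem \ref{Thm-Sta-HK}), pointwise positivity of $H$ cannot be invoked as a tool. The translation from $\|H-H_{0}\|_{1,M}$ to $\|f_{\nu}^{F}-H_{0}^{-1}\|_{1,M}$ must therefore proceed through carefully weighted boundary identities rather than direct inversion of $H$.
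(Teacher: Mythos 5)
Your overall architecture is the right one and matches the paper: take the anisotropic torsion potential, use the interior Wulff sphere condition and Schauder theory to get the foliated one-sided neighbourhood with uniform bounds (so that \eqref{condi-hnorm-upperbound} and the lower bounds on $\vert D^\sharp f\vert$ and $\vert f\vert$ hold), interpolate an $L^{2}$-bound on $\mr{\bar\n}^{2}_{F}f$ against the $C^{2}$-bound to get $L^{n+1}$, and apply \autoref{Thm-levelset-stab} with $p=n+1$ to land on the exponent $\tfrac{1}{n+2}$. The gap is precisely at the step you yourself flag as the hardest: the passage from the Reilly-type identity to the $L^{1}$-deficit $\|H-\mathfrak H\|_{1,M}$. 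Your proposed intermediate inequality
\[
\int_{\Om}\vert\mr{\bar\n}^{2}_{F}f\vert^{2}\,dx\;\leq\;C\int_{M}H\left(F(D^\sharp f)-H_{0}^{-1}\right)^{2}d\mu
\]
is not the anisotropic Reilly identity and is unsubstantiated: Reilly gives $\int_{\Om}\vert\mr{\bar\n}^{2}_{F}f\vert^{2}\,dx=\tfrac{n}{n+1}\vert\Om\vert-\int_{M}HF^{2}(D^\sharp f)\,d\mu$, in which the natural right-hand side after inserting a constant is the curvature deficit weighted by $F^{2}(D^\sharp f)$, not $H$ weighted by the squared Neumann deficit; and since you correctly refuse to assume $H>0$, your right-hand side is not even manifestly nonnegative. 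Your second unproven ingredient, $\|F(D^\sharp f)-H_{0}^{-1}\|_{1,M}\lesssim\|H-H_{0}\|_{1,M}$, is itself a stability statement; the version that actually falls out of the correct identity is an $L^{2}$-bound whose square is controlled by the $L^{1}$ curvature deficit, so chasing the deficit through the Neumann data would at best cost a square root and degrade the final exponent.

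The paper's argument avoids this detour entirely. Writing $\mathfrak H=\tfrac{n}{n+1}\tfrac{\mu(M)}{\vert\Om\vert}$, $\tilde R_{1}=\tfrac{\vert\Om\vert}{\mu(M)}$, splitting $H=\mathfrak H+(H-\mathfrak H)$ in the Reilly formula and using the divergence identity $\int_{M}F(D^\sharp f)\,d\mu=\vert\Om\vert$ yields the exact identity
\[
\int_{\Om}\vert\mr{\bar\n}^{2}_{F}f\vert^{2}\,dx+\frac{n}{n+1}\frac{1}{\tilde R_{1}}\int_{M}\left(F(D^\sharp f)-\tilde R_{1}\right)^{2}d\mu=\int_{M}(\mathfrak H-H)F^{2}(D^\sharp f)\,d\mu,
\]
in which the Alexandrov deficit appears \emph{linearly}, weighted only by the bounded factor $F^{2}(D^\sharp f)$; discarding the nonnegative boundary term on the left gives $\int_{\Om}\vert\mr{\bar\n}^{2}_{F}f\vert^{2}\,dx\leq C\|H-\mathfrak H\|_{1,M}$ at once, with no sign condition on $H$, no Minkowski formula, and no inversion of $H$. (The $L^{2}$ control of the Neumann deficit comes out as a free by-product, rather than being an input.) With this replacement your remaining steps go through verbatim and reproduce the paper's proof; as written, however, the central estimate of your proposal is not established.
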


Finally, we consider a third problem, namely the stability in the aniso\-tropic version of Serrin's overdetermined problem. Therefore we consider the $F$-anisotropic torsion potential, i.e. a solution to
\eq{\bar\De_{F}f &=1\q\mbox{in}~\Om\\
		f&=0,\q\mbox{on}~\del\Om.}
While it is well known that a solution to this PDE with constant $F$-Neumann derivative must be a Wulff sphere \cite{CianchiSalani:12/2009,WangXia:/2011}, we are not aware of any quantitative stability versions of this in the anisotropic setting. Here we prove the following:

\begin{thm}\label{Thm-Sta-OverD}
Let $n\geq 2$, $\al>0$ and $F$ be an elliptic integrand on $\mathbb{R}^{n+1}$.
Let $\Om\sub \bbR^{n+1}$ be a bounded domain with connected boundary $M\in C^{2,\al}$ that satisfies a uniform interior Wulff sphere condition with radius $r$. 
Then there exists a positive constant depending only on $n,\al, F,r,\diam(\Om)$, $\mu(M)$ and $\abs{M}_{2,\al}$, such that
\eq{
\left\|F(D^\sharp f)-\tfrac{\vert\Om\vert}{\mu(M)}\right\|_{1,M}
    \leq C^{-1}
}
implies
\eq{
\dist(M,{\bf W})
    \leq C
    \left\|F(D^\sharp f)-\tfrac{\vert\Om\vert}{\mu(M)}\right\|_{1,M}^{\frac{1}{2(n+2)}}
}
for some Wulff sphere ${\bf W}\subset\mathbb{R}^{n+1}$.
\end{thm}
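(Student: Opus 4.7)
The plan is to apply \autoref{Thm-levelset-stab} to the anisotropic torsion potential $f$ itself, letting the Serrin deficit
\eq{
D := \left\|F(D^\sharp f)-\tfr{\vert\Om\vert}{\mu(M)}\right\|_{1,M}
}
control $\|\mr{\bar\n}^{2}_{F}f\|_{p,\cU}$ on a suitable one-sided neighborhood $\cU$ of $M$. By Schauder theory applied to $\bar\De_{F}f=1$, $f_{|M}=0$, one has $f\in C^{2,\al}(\overline{\Om})$ with norms controlled by $\abs{M}_{2,\al}$; combined with an anisotropic Hopf boundary point lemma and the uniform interior Wulff sphere condition (comparing $f$ against the explicit torsion on each interior tangent Wulff ball), this yields a universal $\de_{0}>0$ and a uniform positive lower bound on $|Df|$ on $\cU:=\{0\leq -f\leq \de_{0}\}$. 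Consequently the level sets $M_{t}$ foliate $\overline{\cU}$, are uniformly $C^{2,\al}$-close to $M$, and carry uniformly bounded anisotropic Weingarten operators, which verifies \eqref{condi-hnorm-upperbound} and furnishes the lower bounds on $\min_{\overline{\cU}}|D^\sharp f|$ and $\max_{\overline{\cU}}|f|$ appearing in \eqref{condi-tracefree-D^2f}--\eqref{conclu-distance}, all independent of $D$.

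For the pinching estimate, I would use the anisotropic analogue of the Magnanini--Poggesi integral identity. From a Bochner-type computation for the $P$-function $P=F(D^{\sharp}f)^{2}+\tfr{2}{n+1}f$ employed in \cite{WangXia:/2011}, one obtains
\eq{
\int_{\Om}(-f)\,|\mr{\bar\n}^{2}_{F}f|^{2}\,d\ti\mu
=\tfr{1}{2}\int_{M}\bigl(F(D^{\sharp}f)^{2}-c^{2}\bigr)F(D^{\sharp}f)\,d\mu,
}
with $c=|\Om|/\mu(M)$. Since $\int_{M}(F(D^{\sharp}f)-c)\,d\mu=0$ (obtained by integrating $\bar\De_{F}f=1$ via the anisotropic divergence theorem) and $F(D^{\sharp}f)$ is $L^{\infty}$-bounded on $M$ by Schauder, the boundary integral is bounded by $CD$. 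To remove the degenerate weight $(-f)$, I would split $\Om=\{-f>\de\}\cup\{-f\leq\de\}$ for $\de\in(0,\de_{0})$: the first piece contributes at most $\de^{-1}CD$, while on the second $|\{-f\leq\de\}|\leq C\de$ (as $-f\gtrsim \dist(\cdot,M)$ by the gradient bound) together with the uniform pointwise bound $|\mr{\bar\n}^{2}_{F}f|\leq C$ contributes at most $C\de$. Optimizing $\de\sim D^{1/2}$ yields $\int_{\cU}|\mr{\bar\n}^{2}_{F}f|^{2}\leq CD^{1/2}$, and interpolation with the pointwise bound produces $\|\mr{\bar\n}^{2}_{F}f\|_{p,\cU}\leq CD^{1/(2p)}$ for every $p\geq 2$.

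Finally, choosing $p=n+1$ and applying \autoref{Thm-levelset-stab}, whose smallness \eqref{condi-tracefree-D^2f} is ensured as soon as $D\leq C^{-1}$, gives
\eq{
\dist(M,\mathbf{W})\leq C\|\mr{\bar\n}^{2}_{F}f\|_{p,\cU}^{p/(p+1)}\leq C'D^{1/(2(p+1))}=C'D^{1/(2(n+2))},
}
which is the claimed bound. I expect the main obstacle to be the rigorous derivation and bookkeeping of the anisotropic Pohozaev-type identity above: because $F$ is only $C^{0}$ at the origin, the integrations by parts must first be performed on $\{|Df|>0\}$ and then extended using the uniform $C^{1}$-control of $f$ coming from the interior Wulff sphere condition. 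The extra factor $2$ in the exponent $1/(2(n+2))$ relative to \autoref{Thm-Sta-HK} and \autoref{Thm-Sta-Alex} reflects precisely the loss incurred by the weight-removal optimization.
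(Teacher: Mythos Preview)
Your proposal is correct and follows essentially the paper's strategy: build the one-sided neighbourhood via \autoref{Prop-DMMN18-Prop-3-2}, use a weighted integral identity for $\int_\Om (-f)\lvert\mathring{\bar\nabla}^2_F f\rvert^2$, remove the weight at scale $D^{1/2}$, interpolate to $L^{n+1}$ using the $C^{2,\al}$ bounds, and invoke \autoref{Thm-levelset-stab} with $p=n+1$. Two remarks are in order.

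First, the identity you write is not quite correct. What the paper actually derives, via a Green-type identity for $(f,P)$ combined with the anisotropic Pohozaev identity, is
\eq{
\int_\Om (-f)\lvert\mathring{\bar\nabla}^2_F f\rvert^2\,dx
= \tfrac12\int_M \bigl(F^2(D^\sharp f)-R^2\bigr)\bigl\langle \bar\nabla^\sharp_F f-\bar\nabla^\sharp_F l,\tilde\nu\bigr\rangle\,d\tilde\mu,
}
where $l(x)=(F^0(x))^2/(2(n+1))$ and $R$ is arbitrary. The extra $\bar\nabla^\sharp_F l$ term (which appears through the Pohozaev step) is absent from your formula; without it the expression you wrote is not an identity. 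This does not damage your estimate, since $\lvert\langle \bar\nabla^\sharp_F l,\tilde\nu\rangle\rvert\leq C(\diam\Om)$ and the factor $F^2(D^\sharp f)-R^2=(F(D^\sharp f)-R)(F(D^\sharp f)+R)$ still yields $\leq CD$, but you should record the correct form.

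Second, the paper handles the degenerate weight differently. Rather than splitting $\cU$ into $\{-f>\de\}\cup\{-f\leq\de\}$ and optimizing, it sets $\ep=D^{1/2}$, restricts to $\cU_\ep=\cU_f\cap\{f<-\ep\}$ (where $-f\geq\ep$ trivially), obtains $\int_{\cU_\ep}\lvert\mathring{\bar\nabla}^2_F f\rvert^{n+1}\leq C\ep$, and then applies \autoref{Thm-levelset-stab} to $\hat f=f+\ep$ on $\cU_\ep$, which gives $\dist(\{f=-\ep\},\mathbf{W})\leq C\ep^{1/(n+2)}$; a final flow estimate supplies $\dist(M,\{f=-\ep\})\leq C\ep$. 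Your optimization reaches the same exponent and has the minor advantage of applying \autoref{Thm-levelset-stab} directly at $M$, avoiding the passage through the intermediate level set.
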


The paper is organized as follows. In \autoref{sec:Prelim} we give some necessary background on anisotropic geometry, while in the following sections we prove the theorems listed above one by one.

\section{Anisotropic geometry}\label{sec:Prelim}

Let $\left<\cdot,\cdot\right>$ denote the Euclidean scalar product on $\mathbb{R}^{n+1}$.
Let $D,D^\sharp,{\rm div}$ denote respectively the Levi-Civita connection, gradient operator and divergence operator in $(\mathbb{R}^{n+1},\ip{\cdot}{\cdot})$.
The canonical basis in the Euclidean space is denoted by $\{e_1,\ldots,e_{n+1}\}$ and for a function $f\in C^\infty(\mathbb{R}^{n+1})$, we write a comma when taking partial derivatives, that is,
\eq{f_{,\alpha}
= e_{\alpha}f
= \frac{\partial}{\partial x_\alpha}f,
\quad
f_{,\alpha\beta}
= e_{\be}e_{\al}f
= \frac{\partial^2}{\partial x_{\alpha}\partial x_{\beta}}f.
}
$\dist(\cdot,\cdot)$ and $\diam(\cdot)$ denote the Hausdorff distance respectively the diameter of sets in $(\mathbb{R}^{n+1},\ip{\cdot}{\cdot})$.

\subsection*{Ambient geometry }
We fix a smooth, closed and strictly convex hypersurface $\cW\sub \bbR^{n+1}$, where $\cW$ encloses a strictly convex body $\cW_{0}$ in $\bbR^{n+1}$, which contains the origin. We assume $n\geq 2$ throughout the paper. In the setting we are interested in, $\cW$ is often called the {\it Wulff shape}. The Wulff shape introduces a new geometry on $\bbR^{n+1}$ via the so-called {\it Minkowski norm},\eq{F^{0}(x):=\inf\{s\in (0,\8)\cn~ x\in s\cW_{0}\},}
although the homogeneity relation only holds for positive dilations.  The Wulff shape is the unit sphere for the Minkowski norm,
\eq{\cW = \{x\in \bbR^{n+1}\cn~ F^{0}(x)=1\}.}
We also call $\cW_{r}$ a {\it Wulff sphere of radius $r$}, if it is given by
\eq{\cW_{r} = \{x\in \bbR^{n+1}\cn F^{0}(x)=r\}.}

Denote by 
\eq{F\cn \bbR^{n+1}\ra (0,\8),\q F(z)= \sup_{x\in \cW}\ip{x}{z} = \sup_{x\neq 0}\fr{\ip{x}{z}}{F^{0}(x)}
} the support function of $\cW$. This is a convex and $1$-homogenous function and will be referred to as \textit{elliptic integrand} in this paper.

An embedding of $\cW$ is given by the restriction to $\bbS^{n}$ of the $0$-homogenous map $D^{\sharp}F$, 
\eq{\Phi\cn~ \bbS^{n}\ra \cW,\q \Phi := (D^{\sharp} F)_{|\bbS^{n}}= D^{\sharp}_{\bbS^{n}} F + F\id_{\bbS^{n}},}
where
$D^{\sharp}_{\bbS^{n}}F = \pr_{\bbS^{n}}(D^{\sharp} F)$ is the gradient on the round sphere and where we used \cite[Cor.~1.7.3]{Schneider:/2014} and 
\eq{\label{F-homogeneity}\left<\Phi(z),z\right>=F(z)  \q\fa z\in\mathbb{S}^{n}.}

If we denote the minimum and the maximum values of $F$ on $\mathbb{S}^n$ by
\eq{\label{defn-m_F-M_F}m_F=\min_{\mathbb{S}^n}F,\quad M_F=\max_{\mathbb{S}^n}F,}
then we have
\eq{m_F\leq\vert D^\sharp F(z)\vert
	\leq M_F,\quad\forall z\in\mathbb{R}^{n+1}\setminus\{0\}.}

The Minkowski norm is smooth on $\bbR^{n+1}\bs\{0\}$, see \cite[Thm.~13.15]{KrieglMichor:/1997}, and hence for the associated quadratic form $q(x) = \tfr 12 (F^{0}(x))^{2}$ we may define a new Riemannian metric on $\bbR^{n+1}$ by
\eq{\bar g_{x} := D^{2}q(x),}
which is, due to the strict convexity of the norm, a metric on $\bbR^{n+1}\bs\{0\}$.

Now we collect some more useful formulae for $q$ and $F$. Due to the relation
\eq{\tfr 12= q\circ \Phi = q\circ D^{\sharp} F,}
we obtain for all $z,V\in \bbR^{n+1}$
\eq{0=D_{V}(q\circ D^{\sharp} F)(z) = Dq(\Phi(z))(D_{V}D^{\sharp} F(z))}
or in coordinates,
\eq{\label{prelim-1}0 = q_{,\al}(D^{\sharp}F(z)){F_{,}}^{\al}_{\be}(z).}
Also there hold
\eq{\label{F and F0}D^{\sharp}F^{0}(D^{\sharp}F(z)) = \fr{z}{F(z)},\q D^{\sharp}F(D^{\sharp}F^{0}(z)) = \fr{z}{F^{0}(z)},}
see \cite[Prop.~1.3]{Xia:/2012}. Differentiation of the first equation gives in Euclidean coordinates
\eq{\de^{\al}_{\be} ={(F^{0})_{,}}^{\al}_{\ga}{F_{,}}^{\ga}_{\be}F + {(F^{0})_{,}}^{\al}F_{,\be}. }
Now use 
\eq{q_{,\al} = F^{0}(F^{0})_{,\al},\q q_{,\al\be} = (F^{0})_{,\al}(F^{0})_{,\be} + F^{0}(F^{0})_{,\al\be}}
and \eqref{prelim-1} to obtain
\eq{\label{prelim-2}\de^{\al}_{\be}&= \fr{1}{F^{0}}{q_{,}}^{\al}_{\ga}{F_{,}}^{\ga}_{\be}F + \fr{1}{F^{0}}{q_{,}}^{\al}F_{,\be}\\
			 &= \fr{1}{F^{0}}{q_{,}}^{\al}_{\ga}{\br{\tfr 12 F^{2}}_{,}}^{\ga}_{\be} - \fr{1}{F^{0}}{q_{,}}^{\al}_{\ga}{F_{,}}^{\ga}F_{,\be}+ \fr{1}{F^{0}}{q_{,}}^{\al}F_{,\be}\\
			&= {q_{,}}^{\al}_{\ga}(D^{\sharp}F(z)){\br{\tfr 12 F^{2}}_{,}}^{\ga}_{\be}(z), }
where we used $D^{\sharp}F \in \cW$.

\subsection*{Induced anisotropic geometry}
Let $x$ be the embedding of closed mani\-fold $M$ into $\bbR^{n+1}$ with Gauss map $\ti\nu\cn M\ra \bbS^{n}$. The anisotropic normal is defined by
\eq{\nu:= \Phi\circ \ti\nu}
and due to the homogeneity of $q$ it has the property
\eq{\label{eq-bar-g}
\bar g_{\nu}(\nu,x_{\ast}V) = 0,\q \bar g_{\nu}(\nu,\nu) = 1}
for all $V\in T_{x}M = T_{\nu(x)}\cW$.
The {\it induced anisotropic metric} of $x$ on $M$ is defined by
\eq{g_{x}(V,W) := \bar g_{\nu(x)}(x_{\ast}V,x_{\ast}W).}
It is worthy to note that the henceforth defined Riemannian geometry is not simply the geometry induced by $\bar g$ and the embedding $x$, as the anisotropic normal enters via the ambient metric.

The {\it anisotropic second fundamental form} of $x$ on $M$ is the symmetric bilinear form
\eq{h_{ij}:=h_{x}(\del_{i},\del_{j}) := -\bar g_{\nu(x)}(\nu,x_{,ij}) = \bar g_{\nu(x)}(\nu_{,i},x_{,j}),}
where we used \eqref{eq-bar-g}
and
\eq{\label{eq: Q}Q(\nu)(\nu,X,Y):=D^{3}q(\nu)(\nu,X,Y) = 0}
for all $X,Y\in T_{\nu(x)}\cW$.
The {\it anisotropic shape operator} is
\eq{S^{i}_{j} = g^{ik}h_{kj},}
where $(g^{ij})$ is the inverse of $g_{ij} = g_{x}(\del_{i},\del_{j})$. The eigenvalues $\ka_{1}\leq \dots\leq\ka_{n}$ of $S$ are called the {\it anisotropic principal curvatures} and from the symmetry of $h_{ij}$ they are real valued.

For $r\in \{1,\cdots, n\}$, the \textit{normalized $r$-th anisotropic mean curvature} is defined by
$$H_r=\frac{1}{\binom{n}{r}}\sigma_r,$$ where $\sigma_r$ be the $r$-th elementary symmetric function on the anisotropic principal curvatures $\{\kappa_i\}_{i=1}^n$, namely,
\eq{
    \sigma_r
    =\sum_{1\leq i_1<\cdots<i_r\leq n}\kappa_{i_1}\cdots\kappa_{i_r}.
}

Define the {\it anisotropic area element} by
\eq{\label{anisotropic area}d\mu: = F(\ti\nu)d\ti\mu,}
where $d\ti\mu$ is the area element on $M$ induced by the standard Euclidean volume form, see \cite[p.~1406]{Xia:/2013}. Note that $d\mu$ is not equal to th area element induced by $g$, see \cite[Lemma~2.8]{Xia:/2013}. We also define
\eq{\mu(M):=\int_{M}d\mu}
and $\ti\mu(M)$ accordingly.
For submanifolds $M\sub \bbR^{n+1}$ and any function on an open set $\cU$ containing $M$, we define $L^{p}$-norms with respect to the standard Euclidean structures, i.e. we define
\eq{\|f\|_{p,\cU}:=\br{\int_{\cU}\abs{f}^{p}\,dx}^{\fr 1p},\q \|f\|_{p,M}:=\br{\int_{M}\abs{f}^{p}\,d\ti \mu}^{\fr 1p},}
where $dx$ is the standard Euclidean volume element.

Finally we want to comment on our use of the symbol $\abs{\cdot}$. If we plug in a gradient $\n^{G}f$ of a function $f$ on manifold $N$, taken with respect to some inner product $G$, we write
\eq{\abs{\n^{G}f}: = \rt{G^{ij}\del_{i}f\del_{j}f}.}
If we plug in a {\it real-diagonalisable} endomorphism field $A\in T^{1,1}(N)$, we can define
\eq{\abs{A}:=\rt{A^{i}_{j}A^{j}_{i}},}
both definitions avoiding the necessity to refer to a metric within the notation $\abs{\cdot}$.
In this manner, when we plug in tensors into an $L^{p}$-norm, we understand this to be
\eq{\|S\|_{p,M}:=\|\abs{S}\|_{p,M}.}

\begin{rem}
We emphasize here, that for ease of notation we do not furnish anisotropic quantities induced on $M$ by some index $F$. We do so, because in this paper we will not use the classical isotropic geometric quantities and hence this way notation looks much cleaner. If on rare occasions we use the isotropic objects, they will have a tilde. This rule only applies to induced quantities on a hypersurface $M$, like $\nu$, $g$, $h$ and $H$. It does not apply to the $F$-differential operators defined in the next subsection.
\end{rem}

\subsection*{Level-sets}
Let $M\sub \bbR^{n+1}$ be the regular level-set of a smooth function $f$, embedded by a map $x$. Then its Euclidean normal is mod sign given by
\eq{\ti \nu = \fr{D^{\sharp}f}{\abs{D^{\sharp}f}},}
while its anisotropic normal is $\nu = \Phi\circ \ti\nu$. We define the $F$-gradient of $f$ as the unique vector $\bar\n^{\sharp}_{F}f$ with the property
\eq{\bar g_{\nu(x)}(\bar\n^{\sharp}_{F}f,V) = D f(x)V\q\fa V\in T_{\nu(x)}\bbR^{n+1}.}
Since for all $V\in T_{x}M = T_{\nu(x)}\cW$ there holds
\eq{0=\bar g_{\nu(x)}(\nu,x_{\ast}V) = D f(x)V=\bar g_{\nu(x)}(\bar\n^{\sharp}_{F}f,x_{\ast}V),}
 for some $\la\in \bbR$ we have
\eq{\bar\n^{\sharp}_{F}f = \la\nu=\la D^{\sharp}F(D^{\sharp}f).}
Testing with $\bar g_{\nu}(\nu,\cdot)$ we compute
\eq{\la = \bar g_{\nu}(\bar\n^{\sharp}_{F}f,\nu) = DfD^{\sharp}F(\ti \nu) = DF(\ti \nu)\ti \nu\abs{D^{\sharp}f} = F(D^{\sharp}f).}
 Hence there holds
\eq{\bar\n^{\sharp}_{F}f = F(D^{\sharp}f)D^{\sharp}F(D^{\sharp}f). }
Now we define the {\it $F$-Hessian endomorphism}, or in short {\it $F$-Hessian}, by the formula
\eq{\bar\n^{2}_{F}f(x) := \bar g^{\ga\be}_{\nu(x)}f_{,\al\be}(x) = D^{2}\br{\tfr 12 F^{2}}(D^{\sharp}f) \circ D^{2}f \in T^{1,1}_{x}M,}
where we slightly abused notation in the term on the right hand side and used \eqref{prelim-2}.
Hence we are able to express the anisotropic second fundamental form of the level-sets in terms of the $F$-Hessian by differentiation of $c = f(x)$ along the level-set $M$:
\eq{0 = f_{,\al}x^{\al}_{,i}}
and, using \cite[Lemma~3.1]{Xia:/2017},
\eq{0=f_{,\al\be}x^{\al}_{,i}x^{\be}_{,j} + f_{,\al}x^{\al}_{,ij} = f_{,\al\be}x^{\al}_{,i}x^{\be}_{,j} - f_{,\al}\nu^{\al}h_{ij} = f_{,\al\be}x^{\al}_{,i}x^{\be}_{,j} - F(D^{\sharp}f)h_{ij}.}
Hence
\eq{\label{eq-h-D^2f}h = \fr{1}{F(D^{\sharp}f)}D^{2}f_{|T_{x}M} = \fr{1}{F(\ti \nu)}\ti h.}
Tracing with respect to $g_{x}$ gives the following formula for the anisotropic mean curvature,
\eq{F(D^{\sharp}F)H =f_{,\al\be}(\bar g_{\nu(x)}^{\al\be} - \nu^{\al}\nu^{\be}) \equiv \bar\De_{F}f-D^{2}f(\nu,\nu),}
where we also introduced the {\it $F$-Laplacian} $\bar\De_{F}f$ as the trace of the $F$-Hessian enodomorphism.
The \textit{trace-free $F$-Hessian of $f$}  is then given by 
\eq{\mr{\bar\n}^{2}_{F}f = \bar\n^{2}_{F}f - \tfr{1}{n+1}\bar\De_{F}f\id. }
A direct computation shows that
\eq{\label{eq-tracefree-D^2f}
	 \vert \mathring{\bar\n}_{F}^2f\vert^2
	 &= \bar g^{\alpha\beta}_\nu\bar g^{\gamma\tau}_\nu\left(f_{,\al\ga}-\frac{\bar\De_Ff}{n+1}(\bar g_{\nu})_{\al\ga}\right)\left(f_{,\be\tau}-\frac{\bar\De_Ff}{n+1}(\bar g_{\nu})_{\be\tau}\right)\\
	 &= \tr\left((\bar\nabla^2_F f)^2\right)-\frac{(\bar\De_Ff)^2}{n+1}\\
	 & = \abs{\bar\n^{2}_{F}f}^{2} - \fr{(\bar\De_{F}f)^{2}}{n+1}.
}

\section{Level-set stability}\label{sec:level set stability}

In this section we prove a theorem which controls the distance of a hypersurface to the Wulff shape by a level set foliation of a one-sided neighbourhood of that hypersurface. The analogous result in the isotropic setting was proved by the first author in \cite{Scheuer:03/2021}.

\begin{proof}[Proof of \autoref{Thm-levelset-stab}]
We modify the proof of \cite[Thm.~1.1]{Scheuer:03/2021} where necessary.
We prove the case when $\mu(M)=1$, the general case follows after rescaling. Assume $f_{|\cU}>0$, otherwise consider $-f$ instead. Since $M_{t} = \{f=t\}$ are regular level sets of $f$, at $M$ the interior pointing Euclidean normal is given by
\eq{\ti \nu = \fr{D^{\sharp}f}{{\abs{D^{\sharp}f}}},}
while the corresponding anisotropic normal is 
\eq{\nu = \Phi(\ti \nu) = D^{\sharp}F(D^{\sharp}f).}
		
Thanks to \eqref{eq-h-D^2f}, by defining $\mathring{D}^2f\coloneqq D^2f-\frac{\bar\De_Ff}{n+1}\bar g_\nu$ , one has
\eq{F(D^\sharp f)\mathring{h}_{ij} 
	&= F(D^\sharp f)(h_{ij}-\tfrac1nHg_{ij})\\
	&= D^2f(x_i,x_j)-\tfrac1ng^{kl}D^2f(x_k,x_l)g_{ij}\\
	&= \mathring{D}^2f(x_i,x_j)+\tfrac{1}{n+1}\bar\De_Ff\bar g_\nu(x_i,x_j)\\
	&\hp{=}-\frac1ng^{kl}\left(\mathring{D}^2f(x_k,x_l)+\tfrac{1}{n+1}\bar\De_Ff\bar g_\nu(x_k,x_l)\right)g_{ij}\\
	&= \mathring{D}^2f(x_i,x_j)-\tfrac1n g^{kl}\mathring{D}^2f(x_k,x_l)g_{ij}\\
	&= \mathring{D}^2f(x_i,x_j)+\tfrac1n\mathring{D}^2f(\nu,\nu)g_{ij},
}
where we have used
	\eq{g^{kl}x_k^\alpha x_l^\beta=\bar g_\nu^{\alpha\beta}-\nu^\alpha\nu^\beta.}
This gives
	\eq{\label{eq-Sch21-3-1}F^2(D^\sharp f)\vert\mathring{S}\vert^2
	&=\mathring{D}^2f(x_i,x_j)\mathring{D}^2f(x_k,x_l)g^{ik}g^{jl}-\tfrac1n\left(\mathring{D}^2f(\nu,\nu)\right)^2\\
	&\leq \bar g^{\alpha\beta}_\nu\bar g^{\gamma\tau}_\nu\left(f_{,\al\ga}-\tfr{\bar\De_Ff}{n+1}(\bar g_{\nu})_{\al\ga}\right)\left(f_{,\be\tau}-\tfr{\bar\De_Ff}{n+1}(\bar g_{\nu})_{\be\tau}\right)\\
&=\vert \mathring{\bar\n}_{F}^2f\vert^2,
}
where we have used \eqref{eq-bar-g} and \eqref{eq-tracefree-D^2f}.
		
To proceed, we consider the level-set flow
	\eq{\partial_t\phi(t,\xi) 
	&= \frac{D^\sharp F(D^\sharp f(\phi(t,\xi)))}{F(D^\sharp f(\phi(t,\xi)))}
	=\frac{\nu(\phi(t,\xi))}{F(D^\sharp f(\phi(t,\xi)))},\\
	\phi(0,\xi)
	&=\xi,
}
	which makes $\mathcal{U}$ diffeomorphic to $(0,\max_{\bar\cU}f)\times M$ since for any $t>0$ and for any $\xi\in M$, there holds
	\eq{f(\phi(t,\xi)) = \int_0^t\frac{d}{ds}f(\phi(t,\xi))\,ds=t,
}
due to the homogeneity of $F$.
Our aim is to apply \cite[Thm.~1.2]{De-RosaGioffre:/2021} to obtain a stability estimate. To this end, we have to make sure that the $\ti\mu(M_t)$ are comparable to $\ti\mu(\cW)$. Equivalently it suffices to prove the $\mu(M_{t})$ are comparable to $\mu(M)  =1,$ due to \eqref{defn-m_F-M_F} and \eqref{anisotropic area}.
To accomplish this,
note that the evolution of the area element under a hypersurface variation with speed $\partial_t\phi$ is given by (see \cite[(24)]{Xia:/2017})
\eq{\partial_t\,d\mu =\frac{H}{F(D^\sharp f)}\,d\mu = \fr{H}{\abs{D^{\sharp}f}}d\ti\mu,
}
and hence we find with the help of \eqref{condi-hnorm-upperbound},
\eq{
\label{esti-area-flowsurface}\partial_t\mu(M_{t})
	\leq\frac{\sqrt{n}}{\min_{\overline{\mathcal{U}}}\abs{D^{\sharp}f}}\|S\|_{1,M_{t}}&\leq\frac{\sqrt{n}\ti\mu(M_{t})^{\frac{p-1}{p}}}{\min_{\overline{\mathcal{U}}}\abs{D^\sharp f}}\|S\|_{p,M_{t}}\\
	&\leq\frac{C}{\min_{\overline{\mathcal{U}}}\abs{D^\sharp f}}\max(1,\mu(M_{t})),
}
and we may use a similar argument to obtain an estimate from below.
Consequently, recall that $\mu(M)=1$, we obtain
	\eq{1-\frac{C}{\min_{\overline{\mathcal{U}}}\abs{D^\sharp f}}t
	\leq \mu(M_{t})
	\leq e^{\frac{C}{\min_{\overline{\mathcal{U}}}\abs{D^\sharp f}}t}.
}
By defining
	\eq{\label{defn-T1}T_1:=\min\br{\frac{\min_{\overline{\mathcal{U}}}\abs{D^\sharp f}}{2C},\max_{\overline{\mathcal{U}}}f}
}
we deduce that the anisotropic areas of $\{M_t\}_{0\leq t<T_1}$ are uniformly controlled,
	\eq{\tfrac12\leq\vert M_t\vert\leq2,\quad\forall 0\leq t<T_1.
}
	    
For any $t_0\in(0,T_1)$, we integrate \eqref{eq-Sch21-3-1} on $\phi((0,t_0)\times M)$ and use the coarea formula to find
	\eq{\int_0^{t_0}\int_{M_s}\frac{F^{p}(D^\sharp f)\vert \mathring{S}\vert^p}{\vert D^\sharp f\vert}\,d\tilde\mu
	\leq&\int_0^{t_0}\int_{M_s}\frac{\vert\mathring{\bar\n}_{F}^2f\vert^p}{\vert D^\sharp f\vert}\,d\tilde\mu\leq\int_{\mathcal{U}}\vert\mathring{\bar\n}_{F}^2f\vert^p\,dx.
}
From this and \eqref{defn-m_F-M_F}, we obtain
	\eq{\mathcal{L}^1\left(\left\{s\in(0,t_0):\int_{M_s}\vert\mathring{S}\vert^p\,d\tilde\mu
	>\frac{2}{t_0m_F^p\min_{\overline{\mathcal{U}}}\vert D^\sharp f\vert^{p-1}}\int_{\mathcal{U}}\vert\mathring{\bar\n}_{F}^2f\vert^p\, dx\right\}\right)
	\leq\frac{t_0}{2},
}
where $\mathcal{L}^1$ is the one-dimensional Lebesgue measure.
Now define
	\eq{t_0\coloneqq\frac{2}{m_F^p}\min\left(\min_{\overline{\mathcal{U}}}\vert D^\sharp f\vert,\max_{\overline{\mathcal{U}}}f\right)^{\frac{1}{p+1}}\|\mathring{\bar\n}_{F}^2f\|^{\frac{p}{p+1}}_{p,\mathcal{U}}.
}
At this point we have to assume that $t_0<T_1$ and hence in accordance with \eqref{defn-T1}, we demand
	\eq{\label{ineq-tracefree-D^2f}\|\mathring{\bar\n}_{F}^2f\|_{p,\overline{\mathcal{U}}}^{\frac{p}{p+1}}
	\leq \frac{m_F^p}{4C}\min\left(\min_{\overline{\mathcal{U}}}\vert D^\sharp f\vert,\max_{\overline{\mathcal{U}}}f\right)^{\frac{p}{p+1}}.
}
Thanks to the definition of $t_0$, there exists $s\in(0,t_0)$ such that $M_s$ satisfies
	\eq{\label{ineq-S_F-M_s}\|\mathring{S}\|_{p,M_s}
	\leq \frac{1}{\min\left(\min_{\overline{\mathcal{U}}}\vert D^\sharp f\vert,\max_{\overline{\mathcal{U}}}f\right)^{\frac{p}{p+1}}} \|\mathring{\bar\n}_{F}^2f\|_{p,\mathcal{U}}^{\frac{p}{p+1}}.
}
	    
To appeal to \cite[Thm.~1.2]{De-RosaGioffre:/2021}, we rescale $M_s$ by
	\eq{\breve M_s\coloneqq\lambda_sM_s,
}
where
\eq{
(\tfrac{m_F}{2})^{\frac1n}\ti\mu(\cW)^{\fr 1n}
\leq \lambda_s=\left(\frac{\ti\mu(\cW)}{\ti\mu(M_{s})}\right)^{\frac1n}
\leq (2M_F)^{-\frac1n}\ti\mu(\cW)^{\fr 1n}.
}
Here we have used 
\eq{\tfr{1}{M_{F}}\mu(M_{s})\leq \ti\mu(M_{s})\leq \tfr{1}{m_{F}}\mu(M_{s}).} 
Then there holds $\ti\mu(M_s)=\ti\mu(\cW)$ and
	\eq{\label{eq-rescaled-h}
	\|{\breve S}\|_{p, \breve M_s} = \lambda_s^{\frac{n-p}{p}} \|{S}\|_{p,M_s},\q \|\mathring{\breve S}\|_{p,\breve M_s} = \lambda_s^{\frac{n-p}{p}} \|\mathring{S}\|_{p,M_s}.
}
	    
Applying \cite[Thm.~1.2]{De-RosaGioffre:/2021}, we see that there exists positive numbers $\delta_1$ and $C_1$, depending only on $n,F,p,C_0$, such that whenever
	\eq{\label{ineq-delta-1}\|\mathring{\breve S}\|_{p,\breve M_s}\leq\delta_1,
}
then there is a parametrization $\breve\psi_s:\mathcal{W}\ra\breve M_s$ and a vector $v=v(\breve M_s)$ with
	\eq{\|\breve\psi_s-{\rm Id}-v\|_{W^{2,p}(\mathcal{W})}
	\leq C_1\|\mathring{\breve S}\|_{p,\breve M_s}.
}
From \eqref{ineq-tracefree-D^2f}, \eqref{ineq-S_F-M_s} and \eqref{eq-rescaled-h} we conclude that condition \eqref{condi-tracefree-D^2f}  is sufficient to achieve \eqref{ineq-delta-1}.
On the other hand, up to a translation, we may assume without loss of generality that $v=0$.
In particular, since $p>n$, we obtain the estimate on Hausdorff distance:
\eq{\label{ineq-Hausdist-M_s}
	\dist(M_s,\mathcal{W}_{\lambda_s^{-1}})
	&\leq C_1\lambda_s^{\frac{n-p}{p}}\|\mathring{S}\|_{p,M_s}\\
	&\leq \frac{C(n,F,p,C_0)}{\min\left(\min_{\overline{\mathcal{U}}}\vert D^\sharp f\vert,\max_{\overline{\mathcal{U}}}f\right)^{\frac{p}{p+1}}} \|\mathring{\bar\n}_{F}^2f\|_{p,\mathcal{U}}^{\frac{p}{p+1}}.
}
To conclude the proof, note that every $\xi\in M$ is connected to a unique point in $M_s$ by the curve $\phi(\cdot,\xi)$, with
	\eq{
	\xi=\phi(0,\xi)\in M,\quad\phi(s,\xi)\in M_s.
}
Along this curve, we easily find
\eq{\label{esti-dist-flowsurface}
\dist(\xi,\phi(s,\xi))
	\leq \frac{sM_F}{m_F\min_{\overline{\mathcal{U}}}\vert D^\sharp f\vert}
	\leq \frac{t_0M_F}{m_F\min\left(\min_{\overline{\mathcal{U}}}\vert D^\sharp f\vert,\max_{\overline{\mathcal{U}}}f\right)},
} 
it follows from \eqref{ineq-Hausdist-M_s} and the definition of $t_0$ that
	\eq{
	\dist(M,\mathbf{W})
	\leq \frac{C(n,F,p,C_0)}{\min\left(\min_{\overline{\mathcal{U}}}\vert D^\sharp f\vert,\max_{\overline{\mathcal{U}}}f\right)^{\frac{p}{p+1}}} \|\mathring{\bar\n}_{F}^2f\|_{p,\mathcal{U}}^{\frac{p}{p+1}},
}
for some Wulff sphere $\mathbf{W}$. This completes the proof.
\end{proof}

\section{Applications of level-set stability}

\subsection{Preliminary regularity: Existence of tubular neighbourhoods}
\begin{definition}[Uniform interior Wulff sphere condition]\label{Defn-interior-Wulffshape}
Let $F$ be an elliptic integrand and $\Om$ be a bounded domain in $\mathbb{R}^{n+1}$ with boundary $M$.
For $r>0$,
we say that
\textit{$M$ satisfies the uniform interior Wulff shape condition with radius $r$}, if
for every $x\in M$, there exists a Wulff sphere $\mathcal{W}_{r}(y_x)\subset\overline\Om$ centered at $y_x$ with radius $r$, such that $\mathcal{W}_{r}(y_x)\cap M=\{x\}$.
\end{definition}

$f\in W_{0}^{1,2}(\Om)$ is called the \textit{$F$-anisotropic torsion potential of $\Om$} if $f$ is the distributional solution of
\eq{\label{defn-torsion-potential}
    \begin{cases}
        \dive(D^{\sharp}(\tfr 12 F^{2})(D^{\sharp}f)) =\bar\De_Ff=1,\quad&\text{in }\Om,\\
        f=0,\quad&\text{on }M.
    \end{cases}
}

We need some further notation. For a subset $A\sub \bbR^{n+1}$ we define $\abs{A}$ to be the $(n+1)$-dimensional Lebesgue measure. For a function $f$ in the H\"older space $C^{k,\be}(\bar\Om)$ we define $\abs{f}_{k,\be,\bar\Om}$ to be its full H\"older norm, while $\vert M\vert_{2,\al}$ is an abbreviation for control of the geometry of a hypersurface $M$ in $C^{2,\al}$-sense, as it appears in the constant of the Schauder estimate. We also write $\abs{M}_{2}:=\abs{M}_{2,0}$. We start with the following observation.

\begin{lemma}
Given an elliptic integrand $F$ and a bounded domain $\Om\subset\mathbb{R}^{n+1}$ with $C^{2}$-boundary $M$, let $f$ be the $F$-anisotropic torsion potential of $\Om$.
Then
\eq{\label{ineq-C0bound-f}
    \vert f\vert_{C^0(\overline\Om)}
    \leq\frac{1}{2(n+1)}\left(\frac{\vert\Om\vert}{\ti\mu(\cW)}\right)^{\frac{2}{n+1}}.
}
\end{lemma}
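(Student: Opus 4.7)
The plan is to prove \eqref{ineq-C0bound-f} via the anisotropic Talenti-type comparison principle, a.k.a.\ convex (Wulff) symmetrization, as developed by Alvino--Ferone--Lions--Trombetti in the anisotropic Euclidean setting and extended to Finsler/anisotropic Laplace-type operators by Belloni--Ferone--Kawohl, Wang--Xia and others.

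First, by the weak maximum principle applied to $\bar\De_F f = 1$ with $f_{|M} = 0$, the potential satisfies $f \leq 0$ on $\ov\Om$, so the task reduces to bounding $-\min_{\ov\Om} f$ from above. Next, I would identify the $F$-torsion potential on a Wulff sphere $\cW_R$ explicitly: exploiting \eqref{prelim-2}, \eqref{F and F0} and the $2$-homogeneity of $q=\tfrac12(F^0)^2$, a direct calculation shows that
\[
g(x) = \frac{(F^0(x))^2 - R^2}{2(n+1)}
\]
solves $\bar\De_F g = 1$ in $\cW_R$ with vanishing boundary datum and attains its minimum $-R^2/(2(n+1))$ at the origin.

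The key step is the anisotropic P\'olya--Szeg\H o / Talenti comparison: the Wulff-radially decreasing rearrangement $(-f)^{\#}$ of $-f$, transplanted onto $\Om^\star = \cW_R$ where $R$ is chosen so that $\abs{\cW_R} = \abs{\Om}$, is dominated pointwise a.e.\ by $-g$. This proceeds through the co-area formula in conjunction with the anisotropic isoperimetric inequality. Evaluating at the origin,
\[
\abs{f}_{C^0(\ov\Om)} = (-f)^{\#}(0) \leq -g(0) = \frac{R^2}{2(n+1)},
\]
and substituting the volume identity $R^{n+1}\abs{\cW_0} = \abs{\Om}$ together with the support-function relation $(n+1)\abs{\cW_0} = \mu(\cW)$ and the comparison $m_F\,\ti\mu(\cW) \leq \mu(\cW) \leq M_F\,\ti\mu(\cW)$ yields a bound of the claimed form.

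The main obstacle will be the rigorous invocation of the anisotropic P\'olya--Szeg\H o inequality, which requires care near the critical set $\{Df=0\}$ where the $F$-Hessian degenerates; once that comparison is in place, the remainder is an explicit volume/radius identification on a Wulff sphere. An alternative (slightly less sharp) route would be a direct comparison argument: enclose $\Om$ in a Wulff sphere and apply the maximum principle to $g - f$, provided one can bound the enclosing radius in terms of $\abs{\Om}^{1/(n+1)}$ — but this route is less clean than the symmetrization approach.
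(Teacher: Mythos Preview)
Your approach via anisotropic Talenti/Schwarz (Wulff) symmetrization is correct and is precisely what underlies the paper's proof: the paper simply invokes \cite[Cor.~4.4]{LuXiaZhang:04/2023}, a symmetrization result in the capillary setting which, specialized to the full space, gives exactly the comparison you outline with the explicit torsion potential $g(x)=\tfrac{(F^0(x))^2-R^2}{2(n+1)}$ on the equal-volume Wulff ball. One minor point: the symmetrization naturally yields $\abs{\cW_0}$ (the volume of the Wulff body) in the denominator, and your last step passing to $\ti\mu(\cW)$ via $m_F\,\ti\mu(\cW)\leq\mu(\cW)=(n+1)\abs{\cW_0}$ introduces extraneous constants rather than recovering the stated bound verbatim; since this lemma is only used to feed a $C^0$-bound depending on $n,F,\abs{\Om}$ into the Schauder estimate of \autoref{torsion regularity}, the precise constant is immaterial.
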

\begin{proof}
This follows from a standard PDE comparison argument.	
Note that, since $\mathbb{R}^{n+1}$ is clearly a convex cone with no boundary,
we may apply \cite[Corollary 4.4]{LuXiaZhang:04/2023} with $F_\theta$ therein replaced by a general elliptic integrand $F$ to get the assertion.
\end{proof}

Now we define the critical set of $f$ as
\eq{
    \mathcal{C}=\{x\in\overline\Om: D^\sharp f(x)=0\},
}
and we have the following regularity properties of $f$:

\begin{prop}\label{torsion regularity}
Given an elliptic integrand $F$ and a bounded domain $\Om\subset\mathbb{R}^{n+1}$ with  $C^{2}$-boundary $M$. Let $f$ be the $F$-anisotropic torsion potential of $\Om$.
There holds 
\enum{
\item $\abs{\mathcal{C}}=0$;
\item $f<0$ in $\Om$;
\item $f\in C^{1,\beta}(\overline\Om)\cap W^{2,2}(\Om)$ for some $\beta = \be(n,F)\in(0,1)$
and there holds
\eq{
    \vert f\vert_{1,\beta,\overline\Om}\leq C(n,F,\vert\Om\vert,\vert M\vert_2).
}
}
\end{prop}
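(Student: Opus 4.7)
The plan is to establish the regularity (iii) first, then deduce the sign in (ii) by comparison, and finally combine the regularity with the pointwise form of the PDE to obtain (i).

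For (iii), the PDE \eqref{defn-torsion-potential} is the Euler-Lagrange equation of the strictly convex coercive functional
\eq{
J(u) = \int_{\Om}\tfr{1}{2}F^{2}(D^{\sharp}u)\,dx - \int_{\Om}u\,dx
}
on $W^{1,2}_{0}(\Om)$, which admits a unique minimizer by the direct method, coercivity coming from $F(p)\geq m_{F}|p|$. The crucial structural observation is that $\tfr{1}{2}F^{2}\in C^{1,1}(\bbR^{n+1})\cap C^{\infty}(\bbR^{n+1}\bs\{0\})$ with $D^{2}(\tfr{1}{2}F^{2})$ being $0$-homogeneous, smooth, and strictly positive definite on $\bbR^{n+1}\bs\{0\}$; compactness of $\bbS^{n}$ thus yields uniform ellipticity bounds
\eq{
c_{F}I\leq D^{2}(\tfr{1}{2}F^{2})(p)\leq C_{F}I,\q\forall p\in\bbR^{n+1}\bs\{0\}
}
with $c_{F},C_{F}$ depending only on $F$. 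Consequently the operator $\dive A(D^{\sharp}\cdot)$ with $A(p):=D^{\sharp}(\tfr{1}{2}F^{2})(p) = F(p)D^{\sharp}F(p)$ is a uniformly elliptic quasilinear operator in divergence form, and the classical boundary regularity theory for such equations (in the spirit of Lieberman and Ladyzhenskaya-Uraltseva) delivers $f\in C^{1,\be}(\ov\Om)$ with $\be=\be(n,F)$ and H\"older norm controlled by $n,F,|\Om|$ (through the $C^{0}$-bound provided by the preceding lemma) and $|M|_{2}$ (through the boundary regularity step). The $W^{2,2}(\Om)$ estimate then follows from the standard difference-quotient technique applied to the weak formulation, using the uniform monotonicity inequality
\eq{
\ip{A(p)-A(q)}{p-q}\geq c_{F}|p-q|^{2}
}
to control tangential second derivatives; normal derivatives are recovered from the equation, and boundary estimates are handled after straightening $M\in C^{2,\al}$.

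For (ii), the constant function $u\equiv 0$ is a subsolution of the PDE since $A(0)=0$, so the weak comparison principle for strictly monotone operators, applied with the common zero boundary datum, yields $f\leq 0$ in $\Om$. Strict negativity then follows from a V\'azquez-type strong maximum principle, applicable thanks to the uniform ellipticity and smoothness of $\tfr{1}{2}F^{2}$ away from the origin: an interior zero of $f$ would force $f\equiv 0$ locally, contradicting $\bar\De_{F}f=1$.

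For (i), the $W^{2,2}$-regularity from (iii) makes Stampacchia's lemma available: applied componentwise to $D^{\sharp}f\in W^{1,2}(\Om;\bbR^{n+1})$, it gives $D^{2}f=0$ almost everywhere on the critical set $\mathcal{C}=\{D^{\sharp}f=0\}$. On the other hand, the chain rule recasts the PDE pointwise almost everywhere in non-divergence form as
\eq{
a^{\al\be}(D^{\sharp}f)\,f_{,\al\be}=1,\q a^{\al\be}(p):=\del^{2}_{\al\be}\br{\tfr{1}{2}F^{2}}(p),
}
with uniformly bounded coefficients $a^{\al\be}$. Hence on $\mathcal{C}$ the left-hand side vanishes a.e., which is incompatible with the right-hand side unless $|\mathcal{C}|=0$. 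The main technical obstacle is the global $C^{1,\be}$ estimate with explicit quantitative dependence on $|M|_{2}$, which requires careful invocation of the boundary-regularity part of quasilinear elliptic theory tailored to our specific $F$-dependent structure; once this is in place, the weak-comparison argument for (ii) and the a.e.-cancellation argument for (i) are short.
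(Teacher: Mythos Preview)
Your proof is correct and follows essentially the same route as the paper, which simply defers (i) and (ii) to \cite[Prop.~3.2]{DelgadinoMaggiMihailaNeumayer:07/2018} and (iii) to \cite[Prop.~2.3]{CianchiSalani:12/2009} together with Lieberman's boundary regularity \cite{Lieberman:/1988}; you have supplied the standard arguments those references contain, namely uniform ellipticity of $D^{2}(\tfr12 F^{2})$ for the $C^{1,\be}$ and $W^{2,2}$ estimates, weak comparison plus strong maximum principle for the sign, and Stampacchia's lemma combined with the non-divergence form of the equation for $|\cC|=0$. One cosmetic slip: in the $W^{2,2}$ step you write ``straightening $M\in C^{2,\al}$'' although the hypothesis is only $M\in C^{2}$; this is harmless since $C^{2}$ suffices for the difference-quotient argument.
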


\pf{
(i) and (ii) are content of \cite[Prop.~3.2]{DelgadinoMaggiMihailaNeumayer:07/2018}, while (iii) follows from \cite[Prop.~2.3]{CianchiSalani:12/2009} and \cite{Lieberman:/1988}.
Indeed, the Schauder estimate \cite[Thm.~1]{Lieberman:/1988} (choosing $\alpha=1,m=0$ therein) in conjunction with the $C^0$-bound \eqref{ineq-C0bound-f} gives that
\eq{\label{esti-Schauder}
    \vert f\vert_{1,\beta,\overline\Om}
    \leq C(n,F,\vert\Om\vert,\vert M\vert_2),
}
where the dependence on $\vert M\vert_2$ is due to the fact that the Schauder estimate \cite[Thm.~1]{Lieberman:/1988} is built on flattening the boundary and using the local estimates near the flattened boundary \cite[Lemma 1-4]{Lieberman:/1988}. Here we point out that $\beta$ only depends on $n,F$.
}

\begin{lemma}[Gradient bound on $M$]\label{Lem-IWC-bound}
Let $\Om\subset\mathbb{R}^{n+1}$ be a bounded domain with  $C^{2}$-boundary $M$ that satisfies the uniform interior Wulff sphere condition with radius $r$ and let $f\in C^{1,\beta}(\overline\Om)\cap W^{2,2}(\Om)$ for some $\beta\in(0,1)$ be a solution of \eqref{defn-torsion-potential} in $\Om$,
then
\eq{
\vert D^\sharp f\vert
    \geq\frac{r}{(n+1)M_F}\quad\text{on }M.
}
\end{lemma}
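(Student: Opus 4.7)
The plan is a barrier/Hopf-type argument: fix $x_0\in M$, let $y=y_{x_0}$ be the center of the interior Wulff sphere $\mathcal{W}_r(y)\subset\overline\Om$ touching $M$ at $x_0$, and construct on the Wulff ball $B_r^F(y):=\{x:F^0(x-y)\leq r\}$ an explicit $F$-anisotropic torsion potential as a barrier. Concretely, set
\[
u(x):=\tfrac{1}{2(n+1)}\bigl((F^0(x-y))^2-r^2\bigr),\qquad x\in B_r^F(y).
\]
Using $D^\sharp u(x)=\tfrac{1}{n+1}F^0(x-y)D^\sharp F^0(x-y)$, the identities $D^\sharp F(D^\sharp F^0(z))=z/F^0(z)$ and $F(D^\sharp F^0(z))=1$ (which follow from \eqref{F and F0} and Euler's relation applied to the $1$-homogeneous function $F$), yield
\[
D^\sharp\!\left(\tfrac12 F^2\right)(D^\sharp u)(x)=F(D^\sharp u)D^\sharp F(D^\sharp u)(x)=\tfrac{x-y}{n+1},
\]
so $\bar\De_F u=\dive\bigl(\tfrac{x-y}{n+1}\bigr)=1$ in $B_r^F(y)$, and obviously $u=0$ on $\partial B_r^F(y)=\mathcal{W}_r(y)$.

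Next I compare $u$ with $f$ on $B_r^F(y)\subset\overline\Om$. On $\partial B_r^F(y)$ we have $u\equiv0$, while by \autoref{torsion regularity} $f\leq 0$ in $\overline\Om$ with $f(x_0)=0$, so $f\leq u$ on $\partial B_r^F(y)$ with equality precisely at the tangency point $x_0$. Since $\bar\De_Ff=1=\bar\De_Fu$ in $B_r^F(y)$ and $\bar\De_F$ is a uniformly elliptic quasilinear operator in divergence form (the map $p\mapsto D^\sharp(\tfrac12F^2)(p)$ is monotone on $\bbR^{n+1}\setminus\{0\}$ by the strict convexity of the Wulff shape), the standard weak comparison principle for such operators yields $f\leq u$ throughout $B_r^F(y)$. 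Now $w:=u-f\geq 0$ on $B_r^F(y)$ with $w(x_0)=0$, and both $f$ and $u$ are $C^1$ up to $x_0$ with a common interior unit normal at $x_0$ (since $M$ and $\mathcal{W}_r(y)$ are tangent there with $\mathcal{W}_r(y)\subset\overline\Om$). Differentiating along the outer unit Euclidean normal $\ti\nu$ at $x_0$ gives $\partial_{\ti\nu}u(x_0)\leq\partial_{\ti\nu}f(x_0)$. Because $f=0=u$ on the respective tangent boundaries and $D^\sharp f(x_0),D^\sharp u(x_0)$ point along $\ti\nu$, this becomes
\[
\vert D^\sharp u(x_0)\vert\leq\vert D^\sharp f(x_0)\vert.
\]

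Finally I estimate $\vert D^\sharp u(x_0)\vert$ from below. By $1$-homogeneity of $F$ and $F(D^\sharp F^0(z))=1$,
\[
F(D^\sharp u(x_0))=\tfrac{1}{n+1}F^0(x_0-y)\,F(D^\sharp F^0(x_0-y))=\tfrac{r}{n+1}.
\]
Combined with $F(z)\leq M_F\vert z\vert$ (which follows from \eqref{defn-m_F-M_F} and the $1$-homogeneity of $F$), this gives
\[
\vert D^\sharp u(x_0)\vert\geq\tfrac{F(D^\sharp u(x_0))}{M_F}=\tfrac{r}{(n+1)M_F},
\]
which together with the previous inequality is the claim. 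The only delicate point is the comparison step, since $f$ is only $C^{1,\beta}$; this is handled by the standard weak comparison principle for the quasilinear divergence-form operator $\bar\De_F$, which applies because $f,u\in W^{1,2}\cap C^{1,\beta}$ and the nonlinearity is strictly monotone in the gradient (the role played by the $F$-Hessian $\bar\n_F^2$ through \eqref{prelim-2}).
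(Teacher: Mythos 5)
Your proposal is correct and follows essentially the same route as the paper: the same explicit barrier $\tfrac{1}{2(n+1)}\bigl((F^0(x-y))^2-r^2\bigr)$, a weak comparison for $\bar\De_F$, and a Hopf-type normal-derivative comparison at the touching point, with the final bound obtained via $F(D^\sharp u)=\tfrac{r}{n+1}$ and $F\leq M_F\vert\cdot\vert$ instead of the paper's direct computation of $\partial_{\ti\nu}w$ using $p-y_p=rD^\sharp F(\ti\nu)$. The only cosmetic difference is that you run the comparison on the interior Wulff ball while the paper compares on all of $\Om$ (noting $w\geq 0$ on $M$ and citing the comparison argument of Delgadino--Maggi--Mihaila--Neumayer), and both variants are sound.
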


\begin{proof}
For any $p\in M$, let $\mathcal{W}_{r}(y_p)$ be the interior Wulff sphere touching $M$ at $p$.
Using \eqref{F and F0}, we see that the function 
\eq{w(x)\coloneqq\frac{F^0(x-y_p)^2-r^2}{2(n+1)}} is the $F$-anisotropic torsion potential of the Wulff ball centered at $y_p$ with radius $r$.
Indeed, there holds
\eq{\label{reference function}
D^\sharp w(x)
  =\frac{1}{n+1}F^0(x-y_p)D^\sharp F^0(x-y_p),\quad\bar\nabla^\sharp_Fw(x)
  =\frac{x-y_p}{n+1},
}
and hence, using \eqref{eq: Q}, $w$ solves
\eq{
\begin{cases}
\bar\De_Fw=1\quad&\text{in }\Om,\\
    w\geq0,\quad&\text{on }M.
\end{cases}
}
In particular,
by a standard comparison as given in \cite[equ.~(3.9)]{DelgadinoMaggiMihailaNeumayer:07/2018}, we have $w\geq f$ on $\overline\Om$, so that $w(p)=f(p)=0$ implies
\eq{\frac{\partial f}{\partial\tilde \nu}(p)
    \geq \frac{\partial w}{\partial\tilde\nu}(p)
    &=\frac{1}{n+1}F^0(p-y_p)\left<D^\sharp F^0(p-y_p),\tilde\nu\right>\\
    &=\frac{1}{n+1}F^0(p-y_p)\left<D^\sharp F^0(\nu),\tilde\nu\right>\\
    &=\frac{r}{n+1}\frac{1}{F(\tilde\nu(p))}\\
    &\geq\frac{r}{(n+1)M_F},
}
where we have used the fact that
$p-y_p=r\nu(p)=rD^\sharp F(\tilde\nu(p))$ for the first equality since $\mathcal{W}_{r}(y_p)$ touches $M$ from the interior at $p$, and \eqref{F and F0} for the last equality.
The assertion follows.
\end{proof}

We conclude the following uniform one-sided neighbourhood.

\begin{prop}\label{Prop-DMMN18-Prop-3-2}
Let $F$ be an elliptic integrand and $\Om\subset\mathbb{R}^{n+1}$ a bounded domain with $C^{2}$-boundary $M$ that satisfies the uniform interior Wulff sphere condition with radius $r$. Let $f$ be the $F$-anisotropic torsion potential of $\Om$.
Then there exists a one-sided neighborhood $\cU_{f}$ of $M$, such that $\mathcal{U}_f\cap\mathcal{C}=\emptyset$, with 
\eq{
\overline{\mathcal{U}}_f
    =\bigcup_{\min_{\overline{\mathcal{U}}_f}f\leq t\leq0}\{x\in\overline\Om:f(x)=t\}
}
and 
\eq{C^{-1}
\leq\min\left(\mu(M)^{\fr 1n}\min_{\overline{\mathcal{U}}_f}\vert D^\sharp f\vert,-\min_{\overline{\mathcal{U}}_f}f\right)
}
for some positive constant $C$ that depends on $n,F,r,\mu(M),\vert M\vert_2$. In case that $M\in C^{2,\al}$ for some $0<\al<1$, we obtain
\eq{\abs{f}_{2,\al,\bar\cU_{f}}\leq C,}
where $C$ additionally depends on $\al$ and $\abs{M}_{2,\al}$.
\end{prop}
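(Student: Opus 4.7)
The plan is to build $\mathcal{U}_f$ as a thin inner sublevel-set collar $\{-t_0<f<0\}$ intersected with a Euclidean tubular neighborhood of $M$, combining the boundary gradient lower bound of Lemma~\ref{Lem-IWC-bound} with the H\"older regularity of Proposition~\ref{torsion regularity} to extend a positive gradient into an inner collar, and then integrating along normal rays to obtain a linear-in-distance lower bound on $-f$. Once this is in place, the $C^{2,\alpha}$-upgrade follows from Schauder theory, because the PDE becomes uniformly elliptic on $\mathcal{U}_f$.

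First, Proposition~\ref{torsion regularity}(iii) controls $|f|_{1,\beta,\overline\Om}$ by a constant $K$ depending only on $n,F,|\Om|,|M|_2$, while Lemma~\ref{Lem-IWC-bound} gives $|D^\sharp f|\ge r/((n+1)M_F)$ on $M$. Applying the triangle inequality to the $C^{0,\beta}$-estimate for $D^\sharp f$ produces $\rho=\rho(n,F,r,|\Om|,|M|_2)>0$ such that $|D^\sharp f|\ge c_0:=r/(2(n+1)M_F)$ throughout the Euclidean collar $N_\rho:=\{x\in\overline\Om:\dist(x,M)\le\rho\}$. Since $M\in C^2$ with controlled norm, after further shrinking $\rho$ the map $(p,s)\mapsto p-s\ti\nu(p)$ becomes a $C^1$-diffeomorphism from $M\times[0,\rho)$ onto $N_\rho\cap\overline\Om$, so every $x\in N_\rho$ has a unique closest point $p(x)\in M$ and Euclidean depth $s(x)=\dist(x,M)$.

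Next, at every $p\in M$, the condition $f|_M=0$ forces $D^\sharp f(p)$ to be collinear with $\ti\nu(p)$ with $\langle D^\sharp f(p),\ti\nu(p)\rangle\ge 2c_0$. Combined with the $\beta$-H\"older control and integration along the inward normal,
\[
-f\bigl(p-s\ti\nu(p)\bigr)=\int_0^s\bigl\langle D^\sharp f(p-t\ti\nu(p)),\ti\nu(p)\bigr\rangle\,dt\ge 2c_0 s-CKs^{1+\beta},
\]
so one last data-dependent reduction of $\rho$ yields $-f(x)\ge c_0 s(x)$ throughout $N_\rho$. Setting $t_0:=c_0\rho/2$ and $\mathcal{U}_f:=\{x\in N_\rho\cap\Om:-t_0<f(x)<0\}$, this bound pinches $\mathcal{U}_f\subset\{s<\rho/2\}$ and makes each $\{f=t\}\cap N_\rho$ with $t\in(-t_0,0)$ a graph over $M$ in the normal parametrisation; thus $\overline{\mathcal{U}}_f$ is swept out by these graphs and $\overline{\mathcal{U}}_f\cap\mathcal{C}=\emptyset$. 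This step is the main obstacle: without the linear lower bound $-f\ge c_0\dist(\cdot,M)$, the super-level set $\{f>-t_0\}$ could a priori carry components disconnected from $M$ inside $N_\rho$, destroying the tube structure, and forcing the H\"older remainder $CK s^\beta$ below $c_0$ is precisely what rules this out.

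Finally, on $\overline{\mathcal{U}}_f\subset N_\rho$ we have $|D^\sharp f|\ge c_0$ and $-\min f=t_0$, both bounded below by constants depending only on $n,F,r,\mu(M),|M|_2$; this yields the required inequality $C^{-1}\le\min(\mu(M)^{1/n}\min|D^\sharp f|,-\min f)$. Under the additional assumption $M\in C^{2,\alpha}$, the equation $\dive(D^\sharp(\tfrac12 F^2)(D^\sharp f))=1$ restricted to $\mathcal{U}_f$ becomes uniformly elliptic with smooth coefficients, because $D^\sharp f$ stays bounded away from the degenerate origin of $F$, and both boundary components of $\mathcal{U}_f$ are $C^{2,\alpha}$ (for $M$ by assumption, for $\{f=-t_0\}$ by the implicit function theorem applied to $f$ with non-vanishing gradient). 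The boundary Schauder estimate of \cite{Lieberman:/1988} then delivers $|f|_{2,\alpha,\overline{\mathcal{U}}_f}\le C(n,\alpha,F,r,\mu(M),|M|_{2,\alpha})$.
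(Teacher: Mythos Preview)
Your approach is the same as the paper's---combine the boundary gradient bound of Lemma~\ref{Lem-IWC-bound} with the global $C^{1,\beta}$-control of $D^\sharp f$ from Proposition~\ref{torsion regularity} to propagate a positive gradient lower bound into an inner collar, then invoke Schauder theory for the $C^{2,\alpha}$-upgrade---but you supply considerably more detail than the paper, which simply asserts that these two ingredients ``ensure the existence'' of the foliated collar. Your integration along normal rays to extract the quantitative lower bound on $-\min f$ is a useful elaboration that the paper leaves implicit.

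Two points deserve tightening. First, your claim that the inner boundary $\{f=-t_0\}$ is $C^{2,\alpha}$ ``by the implicit function theorem applied to $f$ with non-vanishing gradient'' is circular: at this stage $f$ is only known to be $C^{1,\beta}$, so the implicit function theorem yields only a $C^{1,\beta}$ level set. The fix is immediate and avoids the inner boundary altogether: you already have $|D^\sharp f|\ge c_0$ on all of $N_\rho$, so the equation is uniformly elliptic there; interior Schauder on $N_\rho$ then controls $|f|_{2,\alpha}$ on the compactly contained set $\{s\le\rho/2\}\supset\overline{\mathcal{U}}_f$ away from $M$, and boundary Schauder at $M$ handles the rest. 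Second, the proposition as stated asks that $\overline{\mathcal{U}}_f$ equal the union of the \emph{full} level sets $\{x\in\overline\Om:f(x)=t\}$, not merely $\{f=t\}\cap N_\rho$. Your discussion only rules out extra components of $\{f>-t_0\}$ \emph{inside} $N_\rho$; components lying in $\Om\setminus N_\rho$ are not addressed. They are excluded by the weak comparison principle for the Finsler Laplacian (the same one used in Lemma~\ref{Lem-IWC-bound}): on any component $V'$ of $\{f>-t_0\}$ disjoint from $M$ one has $f=-t_0$ on $\partial V'$ and $\bar\De_F f=1>0$, forcing $f\le -t_0$ in $V'$, a contradiction. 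The paper's brief proof is equally silent on this point.
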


\begin{proof}
By virtue of \autoref{Lem-IWC-bound}, $\vert D^\sharp f\vert$ has a uniform positive lower bound on $M$, depending only on $n,F,r$, and hence $\dist(M,\mathcal{C})>0$ strictly.
Indeed, the positive lower bound of $\vert D^\sharp f\vert_{|M}$ and the uniform upper bound of the H\"older-norm of $\vert D^\sharp f\vert$ ensure the existence of the one-sided neighborhood of $M$, say $\mathcal{U}_f$, such that $\dist(\mathcal{U}_f,\mathcal{C})>0$ and $\overline{\mathcal{U}}_f$ is foliated by the level-sets
\eq{
\overline{\mathcal{U}}_f
    =\bigcup_{\min_{\overline{\mathcal{U}}_f}f\leq t\leq0}\{x\in\overline\Om:f(x)=t\},
}
on which $f$ is smooth and satisfying
\eq{
\label{esti-minf-mingradf}
    C^{-1}\leq\min\left(\mu(M)^{\fr 1n}\min_{\overline{\mathcal{U}}_f}\vert D^\sharp f\vert,-\min_{\overline{\mathcal{U}}_f}f\right)
}
for some positive constant $C$ that depends on $n,F,r,\mu(M),\vert M\vert_2$. The final assertion follows from standard Schauder theory, as the operator is uniformly elliptic in $\bar\cU_{f}$.
\end{proof}

\subsection{Stability of the anisotropic Heintze-Karcher inequality}
Recall that the anisotropic Heintze-Karcher (see \cite[Thm.~4.4]{GeHeLiMa:/2009} and also \cite{JiaWangXiaZhang:04/2023} for a refined version) states: for a closed compact embedded strictly anisotropic mean-convex $C^2$-hypersurface $M\subset\mathbb{R}^{n+1}$, there holds that
\eq{\label{ineq-HK}
\int_M\frac{1}{H}\,d\mu
    -\frac{n+1}{n}\vert\Om\vert\geq0,
}
with equality holds if and only if $M$ is a Wulff sphere. In this section we prove corresponding stability result \autoref{Thm-Sta-HK}.

\begin{prop}
Given an elliptic integrand $F$ and a bounded domain $\Om\subset\mathbb{R}^{n+1}$ with $C^{2}$-boundary $M$ such that $H>0$ along $M$, let $f$ be the $F$-anisotropic torsion potential of $\Om$.
Then there holds
\eq{\label{eq-integral-identity-HK}
    \int_\Om \vert\mathring{\bar\n}_{F}^2f\vert^2\, dx
    \leq \br{\frac{n}{n+1}}^2\left(\int_{M}\frac{1}{H}\,d \mu-\frac{n+1}{n}\vert\Om\vert\right).
}
\end{prop}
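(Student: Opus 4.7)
The plan is a Reilly-type integration by parts, followed by a boundary Cauchy-Schwarz estimate and one final application of the Heintze-Karcher inequality itself. To begin, $\bar\Delta_F f=1$ turns the pointwise identity \eqref{eq-tracefree-D^2f} into $|\mathring{\bar\n}_F^2 f|^2 = |\bar\n_F^2 f|^2 - \tfrac{1}{n+1}$, so after integration over $\Om$ the task reduces to bounding $\int_\Om |\bar\n_F^2 f|^2\,dx$ from above. For this I would invoke an anisotropic Reilly-type formula (in the spirit of \cite{WangXia:/2011}): since $f_{|M}=0$, the tangential gradient of $f$ on $M$ vanishes identically, so the induced tangential Laplacian and the purely tangential part of the Hessian drop out, and only the anisotropic mean-curvature contribution on the boundary survives. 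This yields
\eq{
\int_\Om\bigl((\bar\Delta_F f)^2-|\bar\n_F^2 f|^2\bigr)dx = \int_M H\,F(D^\sharp f)^2\,d\mu,
}
which, combined with $\bar\Delta_F f=1$, rearranges into the cornerstone identity
\eq{
\int_\Om|\mathring{\bar\n}_F^2 f|^2\,dx = \tfrac{n}{n+1}\abs{\Om}-\int_M H\,F(D^\sharp f)^2\,d\mu.
}

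Next I would convert $\abs{\Om}$ into a boundary integral via the divergence theorem applied to $\bar\n_F^\sharp f=F(D^\sharp f)\,D^\sharp F(D^\sharp f)$. On $M$ one has $D^\sharp f=\abs{D^\sharp f}\,\ti\nu$, so the $0$-homogeneity of $D^\sharp F$ together with Euler's identity $\ip{D^\sharp F(\ti\nu)}{\ti\nu}=F(\ti\nu)$ and \eqref{anisotropic area} give $\abs{\Om}=\int_\Om\bar\Delta_F f\,dx=\int_M F(D^\sharp f)\,d\mu$. Cauchy-Schwarz on $M$ against the weights $\sqrt{H}$ and $1/\sqrt{H}$ then produces
\eq{
\abs{\Om}^2\leq\Bigl(\int_M H\,F(D^\sharp f)^2\,d\mu\Bigr)\Bigl(\int_M\tfrac{1}{H}\,d\mu\Bigr),
}
equivalently, $\int_M H\,F(D^\sharp f)^2\,d\mu\geq \abs{\Om}^2\big/\int_M\tfrac{1}{H}\,d\mu$.

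Inserting this lower bound into the cornerstone identity and abbreviating $A=\int_M\tfrac{1}{H}\,d\mu$, $B=\abs{\Om}$, I arrive at
\eq{
\int_\Om|\mathring{\bar\n}_F^2 f|^2\,dx \leq \tfrac{n}{n+1}B-\tfrac{B^2}{A} = \tfrac{nB}{(n+1)A}\br{A-\tfrac{n+1}{n}B}.
}
The anisotropic Heintze-Karcher inequality \eqref{ineq-HK} precisely asserts $A\geq\tfrac{n+1}{n}B$, i.e.\ $\tfrac{B}{A}\leq\tfrac{n}{n+1}$, and this upgrades the prefactor $\tfrac{nB}{(n+1)A}$ to at most $\br{\tfrac{n}{n+1}}^2$, delivering \eqref{eq-integral-identity-HK}. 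The main obstacle is the opening Reilly step: correctly identifying/deriving the anisotropic Reilly formula and verifying that \emph{all} tangential boundary terms truly vanish under the zero Dirichlet condition $f_{|M}=0$, so that the boundary contribution collapses cleanly to $\int_M H\,F(D^\sharp f)^2\,d\mu$; the remaining steps are either elementary integration by parts or routine algebra.
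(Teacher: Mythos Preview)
Your proposal is correct and follows essentially the same route as the paper: the anisotropic Reilly identity (which the paper cites from \cite[Prop.~3.3]{DelgadinoMaggiMihailaNeumayer:07/2018}) gives the ``cornerstone identity'' \eqref{eq-Reilly}, the divergence theorem gives \eqref{eq-Omega}, Cauchy--Schwarz yields the lower bound on $\int_M H\,F(D^\sharp f)^2\,d\mu$, and finally the Heintze--Karcher inequality \eqref{ineq-HK} converts the prefactor $\tfrac{nB}{(n+1)A}$ into $\br{\tfrac{n}{n+1}}^2$. Your identification of the Reilly step as the only nontrivial ingredient is accurate.
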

\begin{proof}
The anisotropic Reilly's formula \cite[Proposition 3.3]{DelgadinoMaggiMihailaNeumayer:07/2018} together with \eqref{eq-tracefree-D^2f} gives
\eq{\label{eq-Reilly}
    \int_\Om \vert\mathring{\bar\n}_{F}^2f\vert^2\,dx
    =\frac{n}{n+1}\vert\Om\vert-\int_{M}HF^2(D^\sharp f)\,d\mu.
}
On the other hand, by using the divergence theorem \cite[Lemma 4.3]{CiraoloLi:08/2022} and \eqref{F-homogeneity},
\eq{\label{eq-Omega}
    \vert\Om\vert
    =\int_\Om{\rm div}(D^{\sharp}(\tfr 12 F^{2})(D^{\sharp}f))\,d x
    =\int_{M}F(D^\sharp f)\,d\mu,
}
and it follows from the H\"older's inequality that
\eq{\vert\Om\vert^2
    =\left(\int_{M}F(D^\sharp f)\, d\mu\right)^2
    \leq\left(\int_{M}HF^2(D^\sharp f)\,d\mu\right)\left(\int_{M}\frac{1}{H}\,d\mu\right),
}
which in turn implies
\eq{
    \int_\Om \vert\mathring{\bar\n}_{F}^2f\vert^2\,d x
    \leq \frac{n}{n+1}\vert\Om\vert-\frac{\vert\Om\vert^2}{\int_{M}\frac{1}{H}\,d\mu}
    &=\frac{n}{n+1}\vert\Om\vert\left(\frac{\int_{M}\frac{1}{H}\,d\mu-\frac{n+1}{n}\vert\Om\vert}{\int_{M}\frac{1}{H}\,d\mu}\right)\\
    &\leq\br{\frac{n}{n+1}}^2\left(\int_{M}\frac{1}{H}\,d\mu-\frac{n+1}{n}\vert\Om\vert\right),
    }
    where we have used \eqref{ineq-HK} to derive the last inequality. 
\end{proof}

We can now prove the stability result.

\begin{proof}[Proof of \autoref{Thm-Sta-HK}]
We use \eqref{eq-integral-identity-HK} and we want to appeal to the level-set stability result \autoref{Thm-levelset-stab} with $p=n+1$. Let $f$ be the $F$-anisotropic torsion potential. Then $\cU_{f}$ as constructed in \autoref{Prop-DMMN18-Prop-3-2} satisfies, up to a negligible sign, the assumptions of \autoref{Thm-levelset-stab}. Furthermore in this neighbourhood we have $C^{2,\al}$-estimates from \autoref{Prop-DMMN18-Prop-3-2}. 
Now we have to verify \eqref{condi-hnorm-upperbound}.
Together with \eqref{eq-integral-identity-HK} we obtain
\eq{
\label{esti-HKdeficit}
    \int_{\mathcal{U}_f}\vert\mathring{\bar\n}_{F}^2f\vert^{n+1}\,d x \leq C\left(\int_M\frac{1}{H}\,d\mu-\frac{n+1}{n}\vert\Om\vert\right)
}
for some positive constant $C$ depends on $n,\al,F,r,\mu(M)\vert,\vert M\vert_{2,\al}$.
Taking \eqref{esti-minf-mingradf} into account, we may get the desired control in \eqref{condi-tracefree-D^2f},
once we provide a smallness condition on the quantity in \eqref{esti-HKdeficit}. The assertion then follows from \autoref{Thm-levelset-stab}.
Precisely, we conclude from \eqref{conclu-distance} that
\eq{
\dist(M,{\bf W})
    \leq C\left(\int_{M}\frac{1}{H}\,d\mu-\frac{n+1}{n}\vert\Om\vert\right)^{\frac{1}{n+2}}
}
for some Wulff sphere ${\bf W}\subset\mathbb{R}^{n+1}$ and some positive constant $C$ that depends on the same factors as above. 
\end{proof}
\subsection{Stability of the anisotropic Alexandrov theorem}

Recall that given 
an elliptic integrand $F$ and a bounded domain $\Om\subset\mathbb{R}^{n+1}$ with smooth boundary $M$, if the anisotropic mean curvature of $M$ is constant, then
\eq{ \frac{n}{H}\int_{M}\,d\mu
    =\int_{M}\left<x,\tilde\nu\right>\,d\tilde\mu
    =(n+1)\vert\Om\vert,
}
where we have used the anisotropic Minkowski formula (see \cite{HeLi:04/2008}) in the first equality.
Enlightened by this, we define the \textit{reference anisotropic mean curvature of $\Om$} as
\eq{\mathfrak{H}(\Om)
    \coloneqq\frac{n}{n+1}\frac{\mu(M)}{\vert\Om\vert},
}
and correspondingly, the \textit{reference anisotropic radius of $\Om$} is defined as $R_1(\Om)=\frac{n}{\mathfrak{H}(\Om)}$.
For simplicity, we shall suppress the dependence on $\Om$ of these quantities in all follows.

\begin{prop}
    Given an elliptic integrand $F$ and a bounded domain $\Om\subset\mathbb{R}^{n+1}$ with $C^{2}$-boundary $M$, let $f$ be the $F$-anisotropic torsion potential of $\Om$.
    Let $\mathfrak{H}$, $R_1$ be the anisotropic reference mean curvature and radius of $\Om$, then there holds
    \eq{\label{eq-integral-identity-Alex}    &\int_\Om\vert\mathring{\bar\n}_{F}^2f\vert^2\,d x+\frac{n}{n+1}\frac{1}{\tilde R_1}\int_{M}\left(F(D^\sharp f)-\tilde R_1\right)^2\,d \mu\\
    =& \int_{M}(\mathfrak{H}-H)F^2(D^\sharp f)\,d\mu,
    }
    where $\tilde R_1
    \coloneqq\frac{R_1}{n+1}
    = \frac{n}{n+1}\frac{1}{\mathfrak{H}}$.
\end{prop}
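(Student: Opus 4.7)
The identity is purely algebraic once one has the anisotropic Reilly formula \eqref{eq-Reilly} and the divergence identity \eqref{eq-Omega} in hand; the only work is to recognise that expanding the square on the left hand side produces exactly the deficit $\int_{M}(\mathfrak{H}-H)F^{2}(D^{\sharp}f)\,d\mu$.

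My plan is to start from the Reilly formula \eqref{eq-Reilly},
\eq{\int_{\Om}\abs{\mr{\bar\n}_{F}^{2}f}^{2}\,dx = \tfrac{n}{n+1}\abs{\Om} - \int_{M}HF^{2}(D^{\sharp}f)\,d\mu,}
and rewrite the first term using the divergence identity \eqref{eq-Omega}: since $\abs{\Om} = \int_{M}F(D^{\sharp}f)\,d\mu$ and $\mathfrak{H} = \tfrac{n}{n+1}\tfrac{\mu(M)}{\abs{\Om}}$, one has $\tfrac{n}{n+1}\abs{\Om} = \mathfrak{H}\tilde R_{1}\abs{\Om}$, which after one more application of \eqref{eq-Omega} will allow me to group the integrals against $d\mu$.

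Next I will expand
\eq{\mathfrak{H}\int_{M}\br{F(D^{\sharp}f)-\tilde R_{1}}^{2}\,d\mu = \mathfrak{H}\int_{M}F^{2}(D^{\sharp}f)\,d\mu - 2\mathfrak{H}\tilde R_{1}\int_{M}F(D^{\sharp}f)\,d\mu + \mathfrak{H}\tilde R_{1}^{2}\mu(M),}
and exploit the two clean consequences of the definitions $\tilde R_{1}=\tfrac{n}{(n+1)\mathfrak{H}}$ and $\mathfrak{H}=\tfrac{n}{n+1}\tfrac{\mu(M)}{\abs{\Om}}$, namely $\mathfrak{H}\tilde R_{1}=\tfrac{n}{n+1}$ and $\mathfrak{H}\tilde R_{1}^{2}\mu(M)=\tfrac{n}{n+1}\abs{\Om}$. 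Together with \eqref{eq-Omega}, these identify the middle and last terms as $\tfrac{2n}{n+1}\abs{\Om}$ and $\tfrac{n}{n+1}\abs{\Om}$ respectively, yielding
\eq{\mathfrak{H}\int_{M}\br{F(D^{\sharp}f)-\tilde R_{1}}^{2}\,d\mu = \mathfrak{H}\int_{M}F^{2}(D^{\sharp}f)\,d\mu - \tfrac{n}{n+1}\abs{\Om}.}

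Finally, noting that $\tfrac{n}{n+1}\tfrac{1}{\tilde R_{1}} = \mathfrak{H}$ by the very definition of $\tilde R_{1}$, I rewrite the second term on the left hand side of the target identity as the displayed expression above, substitute into \eqref{eq-Reilly} for $\tfrac{n}{n+1}\abs{\Om}$, and collect terms to reach $\int_{M}(\mathfrak{H}-H)F^{2}(D^{\sharp}f)\,d\mu$ on the right. Since every ingredient is already stated (Reilly, divergence theorem, and the definitions), I do not foresee any genuine obstacle — the only mild pitfall is keeping track of the factors of $\tfrac{n}{n+1}$ and ensuring the constant term $\mathfrak{H}\tilde R_{1}^{2}\mu(M)$ is simplified to $\tfrac{n}{n+1}\abs{\Om}$ before subtraction, so that it combines correctly with the $\tfrac{n}{n+1}\abs{\Om}$ coming out of Reilly.
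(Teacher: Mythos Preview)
Your proposal is correct and follows essentially the same approach as the paper: both arguments combine the anisotropic Reilly formula \eqref{eq-Reilly} with the divergence identity \eqref{eq-Omega} and the algebraic relations $\mathfrak{H}\tilde R_{1}=\tfrac{n}{n+1}$, $\tilde R_{1}\mu(M)=\abs{\Om}$ to expand the square $\int_{M}(F(D^{\sharp}f)-\tilde R_{1})^{2}\,d\mu$. The paper completes the square starting from $\tfrac{1}{\tilde R_{1}}\int_{M}F^{2}(D^{\sharp}f)\,d\mu$ whereas you expand $(F(D^{\sharp}f)-\tilde R_{1})^{2}$ directly, but this is the same computation.
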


\begin{proof}
Recall that from \eqref{eq-Omega} a simple computation yields 
\eq{
&\frac{1}{\tilde R_1}\int_{M}F^2(D^\sharp f)\,d\mu\notag\\
    =~&\fr{1}{\ti R_{1}}\int_{M}\left(F(D^\sharp f)-\tilde R_1\right)^2\,d \mu+2\int_{M}F(D^\sharp f)\,d\mu-\tilde R_1\mu(M)\notag\\
    =~&\frac{1}{\tilde R_1}\int_{M}\left(F(D^\sharp f)-\tilde R_1\right)^2\,d \mu+2\vert\Om\vert-\tilde R_1\mu(M)\notag\\
    =~&\frac{1}{\tilde R_1}\int_{M}\left(F(D^\sharp f)-\tilde R_1\right)^2\,d \mu+\vert\Om\vert,
}
where we have used the definition of $\tilde R_1$ for the last equality.
Since $\mathfrak{H}=\frac{n}{n+1}\frac{1}{\tilde R_1}$ is a constant, we may use the above equality to find
\eq{
\int_{M}HF^2(D^\sharp f)\,d\mu
    &=\mathfrak{H}\int_{M}F^2(D^\sharp f)\,d\mu+\int_{M}(H-\mathfrak{H})F^2(D^\sharp f)\,d\mu\\
    &=\frac{n}{n+1}\left(\frac{1}{\tilde R_1}\int_{M}\left(F(D^\sharp f)-\tilde R_1\right)^2\,d \mu+\vert\Om\vert\right)\\
    &\hp{=}+\int_{M}(H-\mathfrak{H})F^2(D^\sharp f)\,d\mu.
}
Putting this back into \eqref{eq-Reilly}, we get \eqref{eq-integral-identity-Alex}.
\end{proof}

\begin{proof}[Proof of \autoref{Thm-Sta-Alex}]
From \eqref{eq-integral-identity-Alex} we obtain
\eq{
\int_{\mathcal{U}_f}\vert\mathring{\bar\n}_{F}^2f\vert^2\,d x
\leq&\int_{M}(\mathfrak{H}-H)F^2(D^\sharp f)\,d\mu\leq M^3_F\max_{M}\vert D^\sharp f\vert^2\left\|H-\mathfrak{H}\right\|_{1,M}.
}
The rest of the proof follows from that of \autoref{Thm-Sta-HK}.
\end{proof}

\subsection{Stability of the anisotropic overdetermined problem}
Consider the \textit{anisotropic $P$-function} of the $F$-anisotropic torsion potential $f$, defined as
\eq{
P\coloneqq\frac{F^2(D^\sharp f)}{2}-\frac{1}{n+1}f,
}
on $\Om\bs\mathcal{C}$,
recall that $\nu(x)=D^\sharp F(D^\sharp f(x))$ and
the $F$-gradient of $P$ is then given by
\eq{
\bar\nabla_F^\sharp P(x)
= \bar g_{\nu(x)}^{\alpha\beta}P_{,\beta}(x)e_\alpha.
}

From \cite[equ.~(2.8)]{Xia:/2012} we know
\eq{\label{eq-div(A-gradient-P)}
    {\rm div}(\bar\nabla_F^\sharp P)
    = \vert \mathring{\bar\n}_{F}^2f\vert^2.
    }

We record the Pohozaev-type identity for $P$, which can be deduced as in \cite[Thm.~5]{WangXia:/2011}.

\begin{prop}
Given an elliptic integrand $F$ and a bounded domain $\Om\subset\mathbb{R}^{n+1}$ with $C^{2}$-boundary $M$, let $f$ be the $F$-anisotropic torsion potential of $\Om$.
Then there holds   \eq{\label{eq-Pohozaev}
    \int_\Om P\,d x
    = \frac{1}{2(n+1)}\int_{M}F^2(D^\sharp f)\left<x,\tilde\nu\right>\,d\tilde\mu.
} 
\end{prop}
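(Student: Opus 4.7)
The plan is to use the classical Pohozaev strategy, testing the equation $\bar\De_F f = 1$ against the radial multiplier $\langle x, D^\sharp f\rangle$, as in the anisotropic computation of \cite{WangXia:/2011}. All integrations by parts should be performed on $\Om\setminus\mathcal{C}$ and extended to $\Om$ via $\vert\mathcal{C}\vert = 0$ together with the regularity $f\in C^{1,\be}(\overline\Om)\cap W^{2,2}(\Om)$ from \autoref{torsion regularity}. Making these second-order manipulations rigorous, given the possible degeneracy of $\bar g_\nu$ on the critical set $\mathcal{C}$, is the main technical obstacle I expect.

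The essential algebraic input is Euler's identity applied to the $2$-homogeneous function $\tfrac12 F^2$, yielding $\langle D^\sharp f, D^\sharp(\tfrac12 F^2)(D^\sharp f)\rangle = F^2(D^\sharp f)$. First I would collect two auxiliary identities, both relying on $f_{|M}=0$: testing the PDE against $f$, integrating by parts, and applying Euler yields
\[
    \int_\Om f\,dx = -\int_\Om F^2(D^\sharp f)\,dx,
\]
while the decomposition $\langle x, D^\sharp f\rangle = \partial_\al(x_\al f) - (n+1)f$ combined with $f_{|M}=0$ gives $\int_\Om \langle x, D^\sharp f\rangle\,dx = -(n+1)\int_\Om f\,dx$.

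For the core computation I would multiply the PDE by $\langle x, D^\sharp f\rangle$ and integrate. On the right-hand side, the divergence theorem together with $D^\sharp f \parallel \ti\nu$ on $M$ and Euler's identity collapses the boundary flux to $\int_M F^2(D^\sharp f)\langle x, \ti\nu\rangle\,d\ti\mu$. The bulk term splits according to $\partial_\al(\langle x, D^\sharp f\rangle) = f_\al + x_\be f_{\al\be}$: the first summand contributes $\int_\Om F^2(D^\sharp f)\,dx$ directly via Euler, while a second integration by parts applied to the second summand, using $f_{\al\be}\,\partial_\al(\tfrac12 F^2)(D^\sharp f) = \partial_\be(\tfrac12 F^2(D^\sharp f))$, produces an additional boundary contribution $\tfrac12\int_M F^2(D^\sharp f)\langle x, \ti\nu\rangle\,d\ti\mu$ plus a bulk term proportional to $\int_\Om F^2(D^\sharp f)\,dx$.

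Equating both sides, the left-hand side is $-(n+1)\int_\Om f\,dx$ by the second auxiliary identity, and eliminating $\int_\Om F^2(D^\sharp f)\,dx$ via the first auxiliary identity yields, after routine arithmetic,
\[
    \int_\Om f\,dx = -\tfrac{1}{n+3}\int_M F^2(D^\sharp f)\langle x, \ti\nu\rangle\,d\ti\mu.
\]
To finish, I rewrite $\int_\Om P\,dx = \tfrac12\int_\Om F^2(D^\sharp f)\,dx - \tfrac{1}{n+1}\int_\Om f\,dx = -\tfrac{n+3}{2(n+1)}\int_\Om f\,dx$ using the first auxiliary identity once more, and insert the previous display to arrive at exactly \eqref{eq-Pohozaev}.
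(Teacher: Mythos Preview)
Your approach is correct and is essentially identical to the paper's: both test the equation against the radial multiplier $\langle x, D^\sharp f\rangle$, use the auxiliary identity $\int_\Om f\,dx = -\int_\Om F^2(D^\sharp f)\,dx$, and integrate by parts twice to reach $\int_\Om f\,dx = -\tfrac{1}{n+3}\int_M F^2(D^\sharp f)\langle x,\tilde\nu\rangle\,d\tilde\mu$ before converting to $\int_\Om P\,dx$. The only cosmetic difference is that the paper records the second integration by parts as a separate divergence computation for the vector field $xF^2(D^\sharp f)$ and then eliminates the mixed Hessian term by linear combination, whereas you integrate the mixed term $\int_\Om x_\be\,\partial_\be(\tfrac12 F^2(D^\sharp f))\,dx$ directly; the content is the same.
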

\begin{proof}
Thanks to \autoref{torsion regularity},     
we may use the divergence theorem \cite[Lemma~4.3]{CiraoloLi:08/2022} for the vector-field $xF^2(D^\sharp f(x))$ to obtain
\eq{
\int_{M}F^2(D^\sharp f)\left<x,\tilde\nu\right>\,d\ti\mu
    &=\int_\Om{\rm div}(xF^2(D^\sharp f))\,d x\\
    &=(n+1)\int_\Om F^2(D^\sharp f)\,d x+2\int_\Om \left<x,D^2f[\bar\nabla^\sharp_Ff]\right>\,d x.
}
On the other hand, recall that $f$ solves \eqref{defn-torsion-potential}, one has
\eq{
-(n+1)\int_\Om f\,d x
    &=\int_\Om ({\rm div}(fx)-(n+1)f)\,d x\\
    &=\int_\Om\left<x,D^\sharp f\right>{\rm div}(\bar\nabla^\sharp_Ff)\,d x\\
    &=\int_\Om{\rm div}\left(\left<x,D^\sharp f\right>\bar\nabla^\sharp_Ff\right)-\left<\bar\nabla^\sharp_Ff,D^\sharp f+D^2f[x]\right>\,d x\\
    &=\int_{M}F^2(D^\sharp f)\left<x,\tilde\nu\right>\,d\tilde\mu\\
    	&\hp{=}-\int_\Om F^2(D^\sharp f)+\left<x,D^2f[\bar\nabla^\sharp_Ff]\right>\,d x.
}
Using integration by parts, one finds
\eq{\label{eq-WX11-(38)}
    0=\int_\Om{\rm div}(f\bar\nabla^\sharp_Ff)\,d x
    =\int_{\Om}F^2(D^\sharp f)\,d x+\int_\Om f\,d x.
}
Combining these equalities, we see that
\eq{
&\int_{M}F^2(D^\sharp f)\left<x,\tilde\nu\right>\,d\tilde\mu\\
    =~&2\left(\int_{M}F^2(D^\sharp f)\left<x,\tilde\nu\right>\,d\tilde\mu-\int_\Om F^2(D^\sharp f)\,d x+(n+1)\int_\Om f\,d x\right)\\
    	&+(n+1)\int_\Om F^2(D^\sharp f)\,d x,
}
and hence from \eqref{eq-WX11-(38)}
\eq{
\int_{M}F^2(D^\sharp f)\left<x,\tilde\nu\right>\,d\tilde\mu
    &=(1-n)\int_\Om F^2(D^\sharp f)\,d x-2(n+1)\int_\Om f\,d x\\
    &=(n+1)\int_\Om F^2(D^\sharp f)\,d x-2\int_\Om f\,d x\\
    &=2(n+1)\int_\Om P\,d x.}
\end{proof}

Now we introduce an auxiliary function 
\eq{l(x)=\frac{F^0(x)^2}{2(n+1)}.} 
As in \eqref{reference function} we obtain
\eq{
D^\sharp l(x)=\frac{1}{n+1}F^0(x)D^\sharp F^0(x),\quad\bar\nabla^\sharp_Fl(x)=\frac{x}{n+1},\quad\bar\De_Fl=1,
}
and hence we may rewrite  \eqref{eq-Pohozaev} as
\eq{\label{eq-Pohozaev-2}
    \int_\Om P\,d x
    =\frac{1}{2}\int_{M}F^2(D^\sharp f)\left<\bar\nabla^\sharp_Fl,\tilde\nu\right>\,d\tilde\mu.}
    
\begin{prop}
Given an elliptic integrand $F$ and a bounded domain $\Om\subset\mathbb{R}^{n+1}$ with $C^{2}$-boundary $M$, let $f$ be the $F$-anisotropic torsion potential of $\Om$.
For any $R\in\mathbb{R}$,
there holds
\eq{\label{eq-integral-identity-OverDetermined}
    \int_\Om (-f)\vert\mathring{\bar\n}_{F}^2f\vert^2\,d x
    =\frac12\int_{M}(F^2(D^\sharp f)-R^2)\left<\bar\nabla^\sharp_Ff-\bar\nabla^\sharp_Fl,\tilde\nu\right>\,d\tilde\mu.}
\end{prop}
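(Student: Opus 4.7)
The plan is to combine the divergence identity \eqref{eq-div(A-gradient-P)}, two successive integrations by parts, and the Pohozaev-type identity \eqref{eq-Pohozaev-2}.

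First, \eqref{eq-div(A-gradient-P)} rewrites the left-hand side as $\int_\Om(-f)\dive(\bar\n_F^\sharp P)\,dx$. Since $f=0$ on $M$, an integration by parts applied to the vector field $(-f)\bar\n_F^\sharp P$ kills the boundary contribution and produces
\[
\int_\Om(-f)\vert\mathring{\bar\n}_F^2f\vert^2\,dx=\int_\Om\langle D^\sharp f,\bar\n_F^\sharp P\rangle\,dx.
\]
The bilinear form $\langle D^\sharp u,\bar\n_F^\sharp v\rangle=\bar g_\nu^{\alpha\beta}u_{,\alpha}v_{,\beta}$ is symmetric in $(u,v)$ by symmetry of $\bar g_\nu^{\alpha\beta}$, so we may swap the roles of $f$ and $P$. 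A second integration by parts via the product formula
\[
\dive(P\bar\n_F^\sharp f)=\langle D^\sharp P,\bar\n_F^\sharp f\rangle+P\bar\De_F f=\langle D^\sharp P,\bar\n_F^\sharp f\rangle+P,
\]
where the torsion equation $\bar\De_F f=1$ is used, then yields
\[
\int_\Om\langle D^\sharp f,\bar\n_F^\sharp P\rangle\,dx=\int_M P\langle\bar\n_F^\sharp f,\tilde\nu\rangle\,d\tilde\mu-\int_\Om P\,dx.
\]
The boundary integral simplifies because $f=0$ on $M$ forces $P=\tfrac12F^2(D^\sharp f)$ there.

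Next, substituting the Pohozaev identity \eqref{eq-Pohozaev-2} for $\int_\Om P\,dx$ converts the bulk term into $\tfrac12\int_M F^2(D^\sharp f)\langle\bar\n_F^\sharp l,\tilde\nu\rangle\,d\tilde\mu$. Combining with the preceding display delivers the claim in the special case $R=0$. The extension to arbitrary $R\in\mathbb{R}$ is cost-free: since $\bar\De_F f=1=\bar\De_F l$, the divergence theorem yields
\[
\int_M\langle\bar\n_F^\sharp f-\bar\n_F^\sharp l,\tilde\nu\rangle\,d\tilde\mu=\int_\Om(\bar\De_F f-\bar\De_F l)\,dx=0,
\]
so the $R^2$-contribution to the right-hand side of \eqref{eq-integral-identity-OverDetermined} vanishes identically and may be inserted at no cost.

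The main subtlety lies not in the algebra but in justifying the two integrations by parts under the regularity $f\in C^{1,\beta}(\overline\Om)\cap W^{2,2}(\Om)$ given by \autoref{torsion regularity}, particularly because $\bar\n_F^\sharp P$ is classically well-defined only on $\Om\setminus\mathcal{C}$. This is handled by localising away from both $\mathcal{C}$ and $M$, exploiting $\vert\mathcal{C}\vert=0$, and appealing to the divergence theorem \cite[Lemma~4.3]{CiraoloLi:08/2022} already used in the derivation of \eqref{eq-Pohozaev}.
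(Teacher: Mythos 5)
Your proposal is correct and coincides with the paper's own argument: the two integrations by parts you carry out (using $f=0$ on $M$, $\bar\De_F f=1$, and the symmetry of $\bar g_{\nu}^{\alpha\beta}$) are precisely the Green-type identity the paper invokes in one step, after which both proofs substitute the Pohozaev identity \eqref{eq-Pohozaev-2} and insert the $R^{2}$-term for free via $\int_{M}\left<\bar\nabla^\sharp_Ff-\bar\nabla^\sharp_Fl,\tilde\nu\right>d\tilde\mu=0$. Nothing further is needed.
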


\begin{proof}
Using integration by parts, we obtain the Green-type identity
\eq{
    \int_\Om f{\rm div}(\bar\nabla_F^\sharp P)-P{\rm div}(\bar\nabla_F^\sharp f)\,d x
    =\int_{M}f\left<\bar\nabla_F^\sharp P,\tilde\nu\right>-P\left<\bar\nabla_F^\sharp f,\tilde\nu\right>\,d\tilde\mu.
}
Since $f$ solves \eqref{defn-torsion-potential}, we may use \eqref{eq-div(A-gradient-P)} and expand $P$ on $M$ to obtain
\eq{
\int_\Om (-f)\vert\mathring{\bar\n}_{F}^2f\vert^2\,d x
    =-\int_\Om P\,d x+\frac{1}{2}\int_{M}F^2(D^\sharp f)\left<\bar\nabla^\sharp_Ff,\tilde\nu\right>\,d\tilde\mu.
}
By virtue of the Pohozaev-type identity \eqref{eq-Pohozaev-2}, this reads
\eq{
    \int_\Om (-f)\vert\mathring{\bar\n}_{F}^2f\vert^2\,d x
    =\frac12\int_{M}F^2(D^\sharp f)\left<\bar\nabla^\sharp_Ff-\bar\nabla^\sharp_Fl,\tilde\nu\right>\,d\tilde\mu.
}
The proof is completed once we notice that
\eq{
    0=\int_\Om{\rm div}(\bar\nabla^\sharp_Ff-\bar\nabla^\sharp_Fl)\,d x
    =\int_{M}\left<\bar\nabla^\sharp_Ff-\bar\nabla^\sharp_Fl,\tilde\nu\right>\,d\tilde\mu.
}
\end{proof}

\begin{proof}[Proof of \autoref{Thm-Sta-OverD}]

We may assume that $0\in\Om$. Moreover, we suppose 
\eq{\left\|F(D^\sharp f)-\tfrac{\vert\Om\vert}{\mu(M)}\right\|_{1,M}>0.}
Otherwise we see from \eqref{eq-integral-identity-OverDetermined}
that $D^2f$ is umbilical with respect to $\bar g_\nu$ and hence $M$ is umbilical in the anisotropic sense thanks to \eqref{eq-Sch21-3-1}.
From \cite{GeHeLiMa:/2009} we conclude that $M$ must be a Wulff sphere.

Our starting point is that, if $F(D^\sharp f)$ is constant along $M$, then \eqref{eq-Omega} shows that the constant is in fact characterized by $\abs{\Om}/\mu(M)$.
To proceed, we choose $R=\abs{\Om}/\mu(M)$ in \eqref{eq-integral-identity-OverDetermined}, which gives
\eq{
\int_\Om(-f)\vert\mathring{\bar\n}_{F}^2f\vert^2\,d x
    \leq C\left\|F(D^\sharp f)-\tfrac{\vert\Om\vert}{\mu(M)}\right\|_{1;M},
}
for some positive constant $C$ depending on $n,F,\max_{M}\vert D^\sharp f\vert,\diam(\Om)$,
where we have used 
\eq{R=\frac{\vert\Om\vert}{\mu(M)} \leq C(n,\diam(\Om)),}
which follows from the isodiametric and isoperimetric inequalities,
and 
\eq{\max_{ M}\left<\bar\nabla^\sharp_Fl,\tilde\nu\right>
\leq C(n,\diam(\Om)).}

From \autoref{Prop-DMMN18-Prop-3-2} we obtain $\mathcal{U}_f$, the one-sided neighborhood of $M$, on which all the crucial quantities are under control.
In particular, as in \eqref{esti-minf-mingradf}, there exists a positive constant, denoted by $\hat C$ in the following, which depends on $n,F,r,\vert M\vert_2$, such that
\eq{
\label{esti-minf-mingradf-OP}
    \hat C^{-1}
    \leq\min\left(\mu(M)^{\fr 1n}\min_{\overline{\mathcal{U}}_f}\vert D^\sharp f\vert,-\min_{\overline{\mathcal{U}}_f}f\right).
}
We require that
\eq{
\label{esti-epsilon}
\ep&\coloneqq\left\|F(D^\sharp f)-\tfrac{\vert\Om\vert}{\mu(M)}\right\|_{1,M}^{\tfrac12}\leq \fr{1}{2\hat C}.
}
Hence the set $\mathcal{U}_\ep\coloneqq\mathcal{U}_f\cap\{x\in\Om:f(x)<-\ep\}$ is non-empty and foliated by
\eq{
\overline{\mathcal{U}_\ep}
    =\bigcup_{\min_{\overline{\mathcal{U}}}f\leq t\leq-\ep}\{x\in\Om:f(x)=t\}.
}
Moreover, we have
\eq{
\ep\int_{\mathcal{U}_\ep}\vert\mathring{\bar\n}_{F}^2f\vert^2\,d x
    &\leq\int_{\mathcal{U}_\ep}(-f)\vert\mathring{\bar\n}_{F}^2f\vert^2\,d x\\
    &\leq C(n,F,\max_{x\in M}\vert D^\sharp f\vert,\diam(\Om))\left\|F(D^\sharp f)-\tfrac{\vert\Om\vert}{\mu(M)}\right\|_{1,M}\\
    &=C(n,F,\max_{x\in M}\vert D^\sharp f\vert,\diam(\Om))\ep^2,
}
so that
\eq{
\label{esti-U_ep}
    \int_{\mathcal{U}_\ep}\vert\mathring{\bar\n}_{F}^2f\vert^{n+1}\,d x
    &\leq\max_{\overline{\mathcal{U}}_\ep}\vert\mathring{\bar\n}_{F}^2f\vert^{n-1}\int_{\mathcal{U}_\ep}\vert\mathring{\bar\n}_{F}^2f\vert^2\,d x\\
    &\leq C(n,F,\diam(\Om),\abs{f}_{2,0,\bar\cU_{f}})\ep.
}
We want to conclude the proof by appealing to the level-set stability result \autoref{Thm-levelset-stab}. To this end, we consider $\hat f=f+\ep$, so that $\mathcal{U}_\ep$ is foliated by
\eq{
\overline{\mathcal{U}}_\ep
=\bigcup_{\min_{\overline{\mathcal{U}}}\hat f-\ep\leq t\leq0}\{x\in\Om:\hat f(x)=t\}.
}
We obtain
\eq{
\min\left(\mu(M)^{\fr 1n}\min_{\overline{\mathcal{U}}_\ep}\vert D^\sharp\hat f\vert,-\min_{\overline{\mathcal{U}}_\ep}\hat f\right)
      &\geq\min\left(\mu(M)^{\fr 1n}\min_{\overline{\mathcal{U}}_f}\vert D^\sharp f\vert,-\min_{\overline{\mathcal{U}}_f} f\right)-\ep\\
      &\geq (2\hat C)^{-1}.
}
We are now in the position to apply \autoref{Thm-levelset-stab} (with $p=n+1,\hat f$, and $\mathcal{U}_\ep$),
notice that by virtue of \eqref{esti-U_ep}, \eqref{esti-epsilon} and \eqref{esti-minf-mingradf-OP}, condition \eqref{condi-tracefree-D^2f} will be satisfied once provided a smallness condition on the quantity $\ep$ that depends only on $n,F,r,\vert M\vert_2,\diam(\Om)$ and $\abs{f}_{2,0,\bar\cU_{f}}$, therefore we conclude from \eqref{conclu-distance} that
\eq{
\dist(\{f=-\ep\},{\bf W})
    \leq C\left\|F(D^\sharp f)-\tfrac{\vert\Om\vert}{\mu(M)}\right\|_{1,M}^{\frac{1}{2(n+2)}}.
}
Recall that as shown in the proof of \autoref{Thm-levelset-stab} (see in particular \eqref{esti-area-flowsurface} and the first inequality in \eqref{esti-dist-flowsurface}), the anisotropic area of the flow hypersurface $\{f=-\ep\}$ can be controlled by $\mu(M)$, and the Hausdorff distances between the flow hypersurfaces and the initial hypersurface are also under control:
\eq{
\dist(M,\{f=-\ep\})
    \leq C(n,F,r,\diam(\Om),\vert M\vert_2,\mu(M))\ep,
}
which yields the assertion since $\ep+\ep^{\frac{1}{(n+2)}}\leq2\ep^{\frac{1}{(n+2)}}$ provided $\ep<1$. 
\end{proof}

\begin{rem}
The $C^{2,\al}$-assumption in the main theorems is only used to control the $C^{2}$-norm of $f$. If such control is available by other means, this assumption may be replaced or dropped.
\end{rem}


\providecommand{\bysame}{\leavevmode\hbox to3em{\hrulefill}\thinspace}
\providecommand{\MR}{\relax\ifhmode\unskip\space\fi MR }
\providecommand{\MRhref}[2]{%
  \href{http://www.ams.org/mathscinet-getitem?mr=#1}{#2}
}
\providecommand{\href}[2]{#2}

\end{document}